  \newtheorem{theorem}{Theorem}[section]
  \newtheorem{proposition}[theorem]{Proposition}
  \theoremstyle{definition}
  \newtheorem{definition}[theorem]{Definition}
  \newtheorem{example}[theorem]{Example}
  \newtheorem{remark}[theorem]{Remark}
  \newtheorem{cor}[theorem]{Corollary}
  \newtheorem{rem}[theorem]{Remark}
  \def\gs{Gershgorin disc of the second type\,}
  \def\gss{Gershgorin discs of the second type\,}
  \def\grs{Gershgorin region of the second type\,}
  \def\a{\alpha~}
  \def\l{\lambda}
  \def\n{n \times n~}
  \def\r{\hat{\rho}}
  \title{\vskip10mm Inclusion regions and bounds for the eigenvalues of matrices with a known eigenpair}
  \author{
  Rachid Marsli\footnote{ Supported by grant no. SB181014 of King Fahd University of Petroleum and Minerals.}
  \\ Preparatory Math Department\\
  King Fahd University of Petroleum and Minerals\\Dhahran, 31261\\
  Kingdom of Saudi Arabia\\ rmarsliz@kfupm.edu.sa\\
  Frank J. Hall \\ Department of Mathematics and Statistics\\ Georgia State University\\ Atlanta, GA 30303, USA\\ fhall@gsu.edu
  	}
  \date{\today}
\begin{document}
  %
  %
  %
  %&&&&&&&&&&&&&&&&&&&&&&&&&&&&&&&&&&&&&&&&&&&&&&&&&&&&&&&&&&&&&&&&&&&&&&&&&&&&&&&&&&&&&&&&&&&&&&&&&&&&&&&&&&
  \maketitle
  \begin{abstract}
  	 Let $(\lambda, v)$ be a known real eigenpair of an $n \times n$ real matrix $A$. In this paper it is shown how to locate the other eigenvalues of $A$ in terms of the components of $v$. The obtained region is a union of Gershgorin discs of the second type recently introduced by the authors in a previous paper. Two cases are considered depending on whether or not some of the components of $v$ are equal to zero.
  	 Upper bounds are obtained, in two different ways, for the largest eigenvalue in absolute value of $A$ other than $\l$.  	 
  	 Detailed examples are provided.
  	 Although nonnegative irreducible matrices are somewhat emphasized, the main results in this paper are valid for any $\n$ real matrix with $n\geq3$.   	      	 
  %%%%%%%%%%%%%%%%%%%%%%%%%%%%%%%%%%%
  %%%%%%%%%%%%%%%%%%%%%%%%%%%%%%%%%%%
  %%%%%%%%%%%%%%%%%%%%%%%%%%%%%%%%%%%
  \end{abstract}
  \noindent
  {\it AMS Subj. Class.:} 15A18; 15A42%; 15B51
  \vskip2mm
  \noindent
  {\it Keywords:} Perron eigenvalue; Gershgorin disc; nonnegative irreducible matrix; constant row-sum matrix; stochastic matrix 
  %&&&&&&&&&&&&&&&&&&&&&&&&&&&&&&&&&&&&&&&&&&&&&&&&&&&&&&&&&&&&&&&&&&&&&&&&&&&&&&&&&&&&&&&&&&&&&&&&&&&&&&&&&&
  \section{Introduction}
  In our recent work \cite{HM2} and \cite{HM3}, we showed how to locate the eigenvalues of any real constant row-sum matrix within a region that is smaller than  or, in the worst case, equal to the traditional Gershgorin region. What helped us to obtain such results is the fact that a constant row-sum matrix has a trivial eigenvalue equal to its constant row-sum which is associated with a known eigenvector, the all $1$'s vector.
  If a real matrix $A$ is nonnegative and irreducible, its Perron eigenvector $v_p$ is known to have all positive components. A goal of this work is to show how to use this information to locate the non-Perron eigenvalues of $A$ in terms of the components of $v_p$.\\
  The idea of using a known eigenpair to locate the spectrum of a given matrix was explored recently in the interesting paper \cite{M} by A. Melman.
  Let $(\lambda, v)$ be a known real eigenpair of an $n \times n$ real matrix $A$. In this paper it is shown how to locate the other eigenvalues of $A$ in terms of the components of $v$. The obtained region is a union of Gershgorin discs of the second type recently introduced by the authors in a previous paper. Two cases are considered depending on whether or not some of the components of $v$ are equal to zero.
  Upper bounds are obtained in different ways for the largest eigenvalue in absolute value of $A$ other than $\l$.  	 
  Detailed examples are provided. In Sections 2, 3 and 4, comparisons are made with the original Gershgorin region. In Sections $5$ and $6$, comparisons are made with the location sets and bounds obtained in \cite{M}.
  Although nonnegative irreducible matrices are somewhat emphasized, the main results in this paper are valid for any $\n$ real matrix with $n\geq3$.  
  \section{ First case: $v$ has no zero components} 
  		If $A$ is a real constant row-sum or  a real constant column sum matrix, then a way to obtain an inclusion region for its eigenvalues is described in \cite{HM2}.
  		The obtained region is a union of the Gershgorin discs of the second type which were defined for the first time in \cite{HM1}. Further results on how to improve the location of eigenvalues of real constant row-sum and constant column-sum matrices are obtained in \cite{HM3}.
  		We assume in this section that $A$ is neither constant row-sum nor constant column sum; however, our results also particularly apply to these matrices.\\
  		If $A$ is nonnegative and irreducible, we know that it has a positive eigenvalue $\l_p$ equal to its spectral radius, \cite[Theorem 8.4.4]{HJ}. This eigenvalue, called the Perron eigenvalue of $A$, is simple and is associated with an eigenvector $v_p$
  		called the Perron eigenvector of $A$ and having the property that all its components are strictly positive. 
  		We show how to use $v_p$ to obtain,  for the non-Perron eigenvalues of $A$, an inclusion region that is a union of discs different from those given by the original Gershgorin theorem. To do this, we use the following theorem that allows us to convert $A$ to a matrix $B$ that is constant row-sum and at the same time similar to $A$; the case where $A$ is nonnegative irreducible can be found in \cite{BDS}.
 	\begin{theorem}\label{t1}
 		Let $A$ be an $n \times n$ real matrix.
 		Let $(\l,v)$ be a real eigenpair of $A$ with $v = (v_1, v_2, \dots, v_n)^T$.
 		Suppose that $v_i \neq 0$ for $i=1, 2, \dots, n.$
 		Let $S = \text{diag}(v_1, v_2, \dots, v_n)$ and $B = [b_{ij}]$ be the matrix similar to $A$ given by
 		\begin{equation}\label{e1}
 		B = S^{-1} A S.
 		\end{equation}
 		Then $B$ is a constant row-sum matrix with $Be = \l e$, where $e$ is the all $1$'s vector in $\mathbb{R}^n$.
 	\end{theorem}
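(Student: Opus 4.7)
The plan is to use the defining property of the eigenpair, $Av = \lambda v$, together with the key observation that the vector $v$ itself can be written as $Se$, where $S=\text{diag}(v_1,\ldots,v_n)$ and $e$ is the all-ones vector. Indeed, by construction $Se = (v_1,\ldots,v_n)^T = v$, and since all $v_i \neq 0$, the matrix $S$ is invertible, so $e = S^{-1}v$ as well.

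Once this is in hand, the proof is essentially a one-line manipulation. Starting from $Av = \lambda v$, substitute $v = Se$ on both sides to obtain $ASe = \lambda Se$. Multiplying on the left by $S^{-1}$ gives $S^{-1}ASe = \lambda S^{-1}Se = \lambda e$, since $S^{-1}S = I$ and scalars commute with matrices. By the definition of $B$ in equation (\ref{e1}), the left-hand side equals $Be$, so $Be = \lambda e$. This says precisely that every row-sum of $B$ equals $\lambda$, i.e., $B$ is a constant row-sum matrix with common row-sum $\lambda$.

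There is no real obstacle here; the theorem is essentially a restatement, in matrix form, of the diagonal similarity that sends the eigenvector $v$ to the all-ones vector. The only thing worth being careful about is the hypothesis $v_i \neq 0$ for all $i$, which is used precisely to guarantee that $S$ is invertible so that the similarity $B = S^{-1}AS$ is well defined. The fact that $A$ is real and $(\lambda, v)$ is a real eigenpair ensures that $B$ is real as well, which will matter for the subsequent application of Gershgorin-type results to $B$.
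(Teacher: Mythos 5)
Your proof is correct and is exactly the ``calculation of $Be$'' that the paper's one-line proof alludes to: writing $v = Se$ and computing $Be = S^{-1}ASe = S^{-1}Av = \lambda S^{-1}v = \lambda e$. No difference in approach, just more detail.
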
 
    \begin{proof}
 	Straightforward by calculation of  $Be$.
     \end{proof}
 	\begin{rem}
 		The elements of $B$ are given by
 		\begin{equation}\label{e2}
 		b_{ij} = v_i^{-1} a_{ij} v_j.
 		\end{equation}
 		Note that the  elements of $B$ remain constant if we scale the vector $v$ by any positive number $\a$ since
 		$$(\a\,v_i)^{-1} a_{ij} (\a v_j) = v_i^{-1} a_{ij} v_j = b_{ij}.$$
 		Note also that the expression $Be = \l e$ means that the row-sum of every row of $B$ is equal to $\l$. As in our previous work, the eigenvalue $\l$ is called the trivial eigenvalue of $B$ while any eigenvalue of $B$ that is different from  $\l$ is called a non-trivial eigenvalue of $B$. 	
 	\end{rem}
 
 		\noindent
  		   Since $B$ is a constant row-sum matrix,  we can locate its non-trivial eigenvalues by applying the following theorem
  	       which can be found in our paper \cite[Theorem 2.2]{HM2}.
  	     \begin{theorem}\label{t2}
  	     Let $B$ be an $n \times n$ real constant row-sum matrix.
  	     All the non-trivial eigenvalues of $B$ are located within the union of the Gershgorin discs of the second type of $B^T$.
         \end{theorem}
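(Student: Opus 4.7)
The strategy is to pass from $B$ to $B^{T}$, using $Be=\lambda e$ as a source of extra structure, and then to apply a Gershgorin-style argument to $B^{T}$ that is sharpened by the orthogonality of the right eigenvectors of $B^{T}$ (for non-trivial eigenvalues) against $e$.

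First I would observe that $B$ and $B^{T}$ have the same spectrum, and that transposing $Be=\lambda e$ yields $e^{T}B^{T}=\lambda e^{T}$, so that $e$ is a \emph{left} eigenvector of $B^{T}$ associated with the trivial eigenvalue. If $\mu$ is a non-trivial eigenvalue of $B$, regarded as an eigenvalue of $B^{T}$ with right eigenvector $w$, then the biorthogonality identity
$$\lambda\, e^{T}w \;=\; (e^{T}B^{T})w \;=\; e^{T}(B^{T}w) \;=\; \mu\, e^{T}w,$$
combined with $\mu\neq\lambda$, forces $e^{T}w=0$; that is, $\sum_{j=1}^{n}w_j=0$. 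This is the key extra piece of information that the usual Gershgorin argument applied to $B^{T}$ does not exploit.

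Next I would pick an index $k$ with $|w_k|=\max_{j}|w_j|>0$ and write out the $k$-th row of $B^{T}w=\mu w$:
$$(\mu - b_{kk})\,w_k \;=\; \sum_{j\neq k} b_{jk}\, w_j.$$
Since $w_k=-\sum_{j\neq k}w_j$, I can insert the identically zero quantity $c\sum_{j=1}^{n}w_j$ for any real $c$ to rewrite this as
$$(\mu - b_{kk} - c)\,w_k \;=\; \sum_{j\neq k}(b_{jk}+c)\,w_j,$$
and then bound $|w_j|\le|w_k|$ to obtain $|\mu-b_{kk}-c|\le\sum_{j\neq k}|b_{jk}+c|$. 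Optimising over the free parameter $c$ (a degree of freedom granted precisely by $e^{T}w=0$) places $\mu$ inside the disc of the second type attached to the $k$-th column of $B$, equivalently to row $k$ of $B^{T}$. Taking the union over $k=1,\dots,n$ yields the claimed inclusion.

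The delicate point will be to match this recentering freedom exactly with the definition of the Gershgorin disc of the second type from \cite{HM1}: the constant $c$ that the argument is free to choose must correspond to the parameter entering that definition, and once the identification is confirmed, the remainder is a routine maximum-component calculation. A subsidiary technical issue is that the non-triviality hypothesis $\mu\neq\lambda$ is used essentially in obtaining $e^{T}w=0$, so the reasoning genuinely cannot capture $\lambda$ itself in the refined region, which is exactly the trade-off one expects.
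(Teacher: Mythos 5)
Your proposal is correct, and it is essentially the argument underlying the result: the paper itself offers no proof here, only a citation to \cite[Theorem 2.2]{HM2}, and the standard route to that theorem is exactly your combination of the biorthogonality relation $e^Tw=0$ with the recentering freedom $c$. The ``delicate point'' you flag does check out: after the triangle inequality $|\mu-b_{kk}|\le |c|+\sum_{j\neq k}|b_{jk}+c|$, the right-hand side equals $\sum_{i=1}^n|x_i+c|$ where the $x_i$ are the off-diagonal entries of column $k$ of $B$ together with the inserted $0$ from Definition \ref{d1} (the $|c|$ term is precisely the $|0+c|$ term), and choosing $-c$ to be a median of the $x_i$ makes this sum equal to $\hat{r}_k$ exactly, in both the odd and even cases.
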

     
     	\noindent
        	This type of Gershgorin disc was introduced in \cite{HM1}.
        \begin{definition}\cite[Definition 2.5]{HM1}\label{d1}
        	Let $A = [a_{ij}]$ be an $n \times n$ real matrix.
        	For $\ \ i = 1 , ... , n,\,$ let $x_{i 1} \geq   \, ... \,  \geq  x_{i n}\,$ be a rearrangement in non-increasing order
        	of $$\, a_{i 1}\, , ... , \, a_{i i-1} \, , \, 0 \, , \, a_{i i+1}\, , ... , \, a_{i n}.$$
        	A Gershgorin disc of the second type of $A$, denoted $\hat{D}_i(a_{ii} \, , \, \hat{r}_i)$, satisfies the following conditions:
        	\begin{enumerate}
        		\item Its center $\,a_{ii}\,$ is the diagonal element from the $i^{th}$ row of $A$
        		\item Its radius: 
        		\begin{enumerate}
        			\item $  \hat{r}_i \ \  = \ \ \displaystyle{\sum_{j=1}^{\frac{n-1}{2}} \, x_{i j} \, - \, \sum_{j=\frac{n+3}{2}}^n \, x_{i j}}  \ \ ,\ \ $ if $\,n\,$ is odd.
        			\item $ \hat{r}_i \ \  = \ \ \displaystyle{\sum_{j=1}^{\frac{n}{2}} \, x_{i j} \, - \, \sum_{j=\frac{n}{2}+1}^n \, x_{i j}}  \ \ ,\ \ $ if $\,n\,$ is even.
        		\end{enumerate}
        	\end{enumerate}
        	The \grs~of $A$ is the union of all its \gss.
        \end{definition}
    		
    		\noindent
       		Going back to Theorem \ref{t1}, the matrix $B$ is obtained from the real matrix $A = [a_{ij}]$ according to (\ref{e1}) and has the same spectrum as $A$.
       		Therefore, the combination of Theorem \ref{t1} and Theorem \ref{t2} gives an inclusion region for all the eigenvalues of $A$ other than $\l$.
       		Let's state this result as a theorem.
        \begin{theorem}\label{t3}
       		Let $A$ be a real matrix and suppose that $A v = \l v$ for some real number $\l$ and real eigenvector
       	 	$v = (v_1, v_2, \dots, v_n)^T $ with no zero component.
       		Let $S = \text{diag}(v)$ and $B=S^{-1} A S$.
       		If $\l_2$ is an eigenvalue of $A$ different from $\l$, then $\l_2$ is in the Gershgorin region of the second type of $B^T$.
        \end{theorem}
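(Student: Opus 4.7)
The plan is to simply combine Theorem \ref{t1} and Theorem \ref{t2}, since the hypotheses of this statement have been engineered precisely so that both apply. First I would verify that the setup of Theorem \ref{t1} is met: $v=(v_1,\dots,v_n)^T$ is a real eigenvector of $A$ associated with the real eigenvalue $\lambda$, all components $v_i$ are nonzero, and $S=\operatorname{diag}(v_1,\dots,v_n)$ is therefore invertible, so $B=S^{-1}AS$ is well-defined. By Theorem \ref{t1}, $B$ is a constant row-sum matrix with $Be=\lambda e$, i.e., $\lambda$ is the trivial eigenvalue of $B$ in the sense introduced in the Remark following that theorem.

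Next I would use similarity: because $B=S^{-1}AS$, the matrices $A$ and $B$ have the same spectrum, so any eigenvalue $\lambda_2$ of $A$ is also an eigenvalue of $B$. Under the hypothesis $\lambda_2\neq\lambda$, the eigenvalue $\lambda_2$ is a \emph{non-trivial} eigenvalue of the constant row-sum matrix $B$. Applying Theorem \ref{t2} to $B$ then yields that $\lambda_2$ lies in the union of the Gershgorin discs of the second type of $B^T$, i.e., in the Gershgorin region of the second type of $B^T$, which is the desired conclusion.

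There is essentially no obstacle here: the real work was done in establishing Theorem \ref{t1} (the diagonal similarity that turns $A$ into a constant row-sum matrix whenever a nowhere-vanishing eigenvector is available) and Theorem \ref{t2} (the localization of non-trivial eigenvalues of a constant row-sum matrix via the second-type discs of the transpose). The only minor point worth writing out explicitly is the passage from ``$\lambda_2$ is an eigenvalue of $A$ distinct from $\lambda$'' to ``$\lambda_2$ is a non-trivial eigenvalue of $B$''; this uses both the similarity of $A$ and $B$ and the identification of $\lambda$ as the constant row-sum of $B$.
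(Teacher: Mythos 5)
Your proposal is correct and matches the paper's own argument exactly: the text preceding Theorem \ref{t3} states that the result is simply the combination of Theorem \ref{t1} (the diagonal similarity producing the constant row-sum matrix $B$) with Theorem \ref{t2} (localization of the non-trivial eigenvalues of $B$ in the \grs of $B^T$). Your only added detail, making explicit that similarity preserves the spectrum so that $\l_2\neq\l$ is a non-trivial eigenvalue of $B$, is a worthwhile clarification but not a different route.
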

   		\begin{rem}
   			Observe that $$b_{jj} = \frac{v_j}{v_j}a_{jj} = a_{jj}.$$
   			This is important as it means that the components of $v$ are not needed to find the centers of discs which are simply the diagonal elements of $A$.
   			The radii depend on the entries of $v$, but they are easy to calculate as each one of them is a simple algebraic sum of the elements from the corresponding column of $B$ (according to Definition \ref{d1}).	
   		\end{rem}   
   	
   		\noindent		
   			The Perron eigenvector of every nonnegative irreducible matrix is positive. Therefore we have the following corollary.   		 
   		\begin{cor}\label{c1}
   				Let $A$ be an $n\times n$  nonnegative  irreducible matrix and let $v_p$ be its Perron eigenvector.
   				Then every non-Perron eigenvalue of $A$ is in the Gershgorin region of the second type of  the  matrix $B^T$ given by
   				$B = S^{-1} A S,$ where $S= \text{diag}(v_p)$. 
   		\end{cor}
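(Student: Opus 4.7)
The plan is to derive Corollary \ref{c1} as an essentially immediate specialization of Theorem \ref{t3}, since the Perron--Frobenius structure automatically supplies the hypothesis that the chosen eigenvector has no zero components. First, I would recall from \cite[Theorem 8.4.4]{HJ} (already cited in the text) that every $n \times n$ nonnegative irreducible matrix $A$ admits a Perron eigenvalue $\lambda_p$, real and equal to the spectral radius of $A$, together with a Perron eigenvector $v_p$ all of whose components are strictly positive. In particular, $(\lambda_p, v_p)$ is a real eigenpair of $A$ with $v_p = (v_1, \dots, v_n)^T$ satisfying $v_i \neq 0$ for every $i$.

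Next, I would verify that the hypotheses of Theorem \ref{t3} are met with the choice $\lambda = \lambda_p$ and $v = v_p$. Since every $v_i > 0$, the diagonal matrix $S = \mathrm{diag}(v_p)$ is invertible, so the similarity $B = S^{-1} A S$ is well defined and, by Theorem \ref{t1}, yields a constant row-sum matrix with row-sum $\lambda_p$. I would then invoke Theorem \ref{t3} directly: any eigenvalue $\lambda_2$ of $A$ different from $\lambda_p$ must lie in the Gershgorin region of the second type of $B^T$.

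Finally, I would observe that the eigenvalues of $A$ distinct from $\lambda_p$ are precisely the non-Perron eigenvalues of $A$ (because $\lambda_p$ is simple in the irreducible case, which matches the usual definition of ``non-Perron''), completing the statement of the corollary.

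There is essentially no obstacle in this proof: the corollary is a packaging of Theorem \ref{t3} together with the positivity of the Perron eigenvector. The only minor point worth stating explicitly is that the positivity of $v_p$ is exactly what is needed to ensure $S^{-1}$ exists, so the proof amounts to a one-line verification followed by citation of Theorem \ref{t3}.
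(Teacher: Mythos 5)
Your proposal is correct and matches the paper's own (implicit) argument exactly: the paper derives Corollary \ref{c1} by noting that the Perron eigenvector of a nonnegative irreducible matrix is strictly positive and then applying Theorem \ref{t3}. Nothing further is needed.
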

       	\begin{example}\label{ex1}
       		In this example, we provide a matrix $A$ and its Perron eigenpair; then we show how the other eigenvalues can be located according to Corollary \ref{c1}.
       		Let  $$A\,=\, \left[ \begin{array}{r r r r r r r r}
       	    10 & 4 &  8 &  4 & 6 &  6 \\
       	    2 & 6 &  6 &  2 & 4 &  2 \\
       	    1 & 4 &  8 &  4 & 2 &  4 \\
       	    0 & 6 &  8 &  4 & 0 &  6 \\
       	    4 & 4 &  6 &  0 & 2 &  4 \\
       	    1 & 4 &  6 &  2 & 4 &  6 
           	\end{array} \right].$$
           	The Perron eigenvalue of $A$ is $\l_p = 24$ and its Perron eigenvector is $v_p = (2, 1, 1, 1, 1, 1)^T$.
           	Let $$S = \text{diag}(v_p)$$ and let
           	$$B\,=\,S^{-1} A S \,=\, \left[ \begin{array}{r r r r r r r r}
           	10 & 2 &  4 &  2 & 3 &  3 \\
           	4  & 6 &  6 &  2 & 4 &  2 \\
           	2  & 4 &  8 &  4 & 2 &  4 \\
           	0  & 6 &  8 &  4 & 0 &  6 \\
           	8  & 4 &  6 &  0 & 2 &  4 \\
           	2  & 4 &  6 &  2 & 4 &  6 
           	\end{array} \right].$$
           	According to Corollary \ref{c1}, the non-Perron eigenvalues of $A$ are contained in the union of the Gershgorin discs of the second type of $B^T$ which are:
           	$$ \hat{D}_1(10 ,12) ~,~ \hat{D}_2(6,8) ~,~ \hat{D}_3(8,10) ~,~ \hat{D}_4(4,6) ~,~ \hat{D}_5(2,9)  \text{~~and~~} \hat{D}_6(6,9).$$
           	It can be easily verified that the union of the above discs forms an inclusion set for the non-Perron eigenvalues of $A$
           	which are 
           	$$\l_2 \approx 7.76,~ \l_3 \approx -3.05,~ \l_4 \approx 2.65+1.34i,~ \l_5 \approx 2.65-1.34i, \text{~and~}	 \l_6 =2.$$
           	This region is neatly smaller than the usual Gershgorin region of $A^T$ made up of the discs
           	$$ D_1(10 ,8) ~,~ D_2(6,22) ~,~ D_3(8,34) ~,~ D_4(4,12) ~,~ D_5(2,16)  \text{~~and~~} D_6(6,22).$$
           	It is also smaller than the  Gershgorin region of $A$ made up of the discs
           	$$ D'_1(10 ,28) ~,~ D'_2(6,16) ~,~ D'_3(8,15) ~,~ D'_4(4,20) ~,~ D'_5(2,18)  \text{~~and~~} D'_6(6,17).$$
           	This difference in size between these regions is illustrated in the figure below.
           	\begin{figure}[H]
           		\centering
           		\begin{subfigure}[b]{0.35\textwidth} 
           			\includegraphics[scale=0.34]{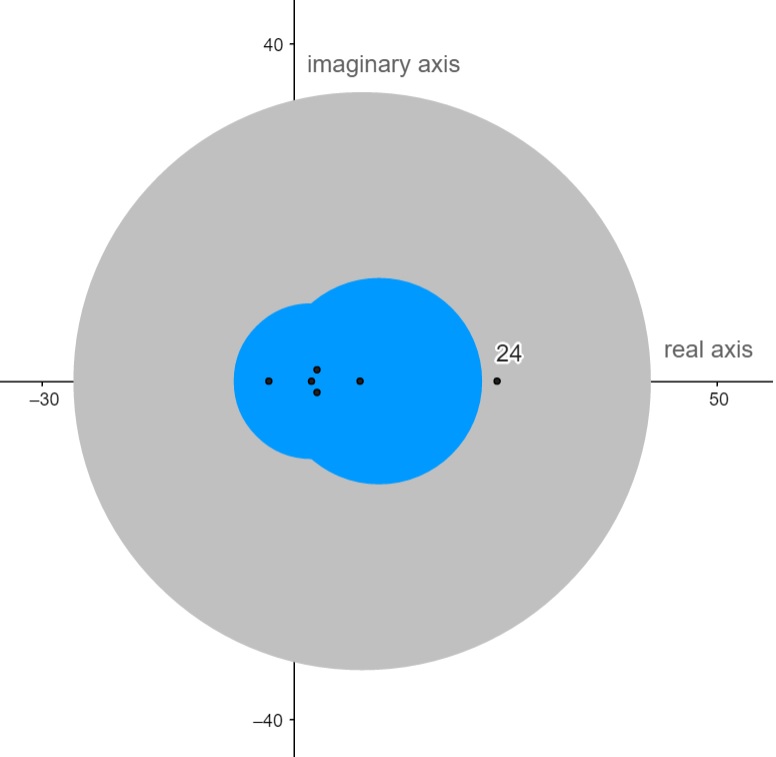}           			          			        
           		\end{subfigure}
           		\hskip20mm
           		\begin{subfigure}[b]{0.30\textwidth}
           			\centering           			
           			\includegraphics[scale=0.30]{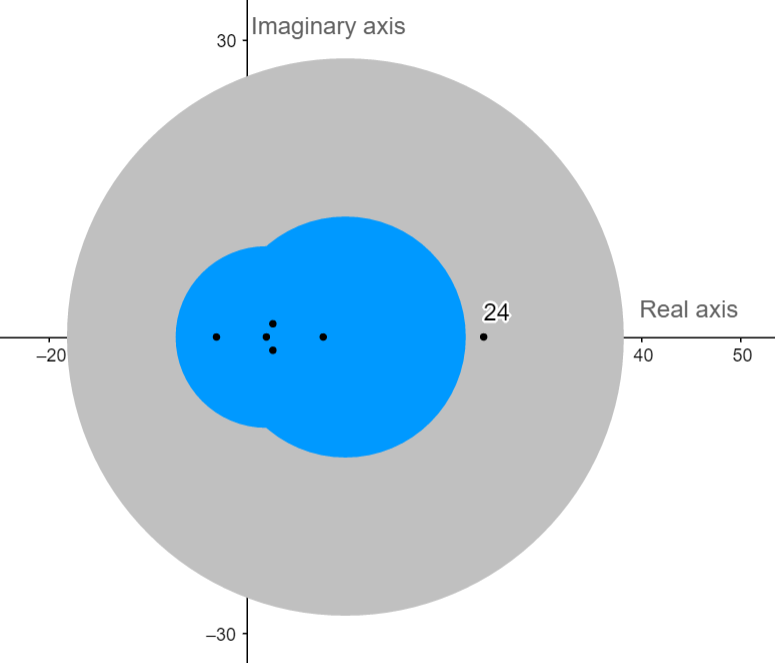}
           			$\vdots$           		
           		\end{subfigure}  
           		\caption{The region given by Corollary \ref{c1} is colored in blue. In the left hand side graph it is compared to the  Gershgorin region of $A^T$ and in the right hand side graph it is compared to the Gershgorin region of $A$. The eigenvalues of $A$ are represented by the small dark points. Observe that all the non-Perron eigenvalues of $A$ are inside the blue region. This is in accordance with Corollary \ref{c1}.
           			Contrary to the Gershgorin theorem, Theorem \ref{t3} and Corollary \ref{c1} impose no constraints on the location of the Perron eigenvalue $\l_p=24$. This explains why $\l_p$ is outside the  blue region for this particular matrix $A$.}		
           	\end{figure}           
            \end{example}
            %%%%%%%%%%%%%%%%%%%%%%%%%%%%%%%%%%%%%
            %%%%%%%%%%%%%%%%%%%%%%%%%%%%%%%%%%%%%
            \begin{rem}
            	Although nonnegative irreducible matrices are emphasized, the results in this section are valid for any $\n$ real matrix having an eigenvector with no zero entry. This is clearly understood from the statement of Theorem \ref{t3}.
            \end{rem}
            \begin{example}\label{ex3}
            	Consider the singular  matrix 
            	$$
            	A\,=\ \left[ \begin{array}{r r r r r r }
            	-2  & 4  &  ~~2  &   0 & -2  &  2   \\
            	-2  & 2  &  ~~2  &  -2 &  0  & -2   \\
            	-4  & 2  &  ~~4  &   0 & -2  &  0   \\
            	-4  & 10 &  ~~6  &  -4 & -16 & -12  \\
            	0  & 4  &  ~~8  &  -4 &  -6 & -2   \\
            	-8  & 2  &  ~~2  &  -2 &   0 & -8
            	\end{array} \right] ,$$
            	with eigenvector $v = (1,\, 1,\, 1,\, 2,\, 1,\, -1)^T$ corresponding to the eigenvalue $\l = 0$.
            	The other eigenvalues of $A$ are approximately $-13.32, ~ -3.71\pm 4.39i~$ and $~3.37\pm 2.12i$.
            	The matrix $B$ given by Theorem \ref{t3} is
            	$$
            	B\,=\ \left[ \begin{array}{r r r r r r }
            	-2  & ~~4  &  ~~2  &    0 &  -2  & -2    \\
            	-2  & ~~2  &  ~~2  &   -4 &   0  &  2    \\
            	-4  & ~~2  &  ~~4  &    0 &  -2  &  0    \\
            	-2  & ~~5  &  ~~3  &   -4 &  -8  &  6    \\
              	 0  & ~~4  &  ~~8  &   -8 &  -6  &  2    \\
            	 8  &  -2  &   -2  &    4 &   0  & -8   
            	\end{array} \right] .$$
            	According to the same theorem, all the nonzero eigenvalues of $A$ are in the union of the \gss of $B^T$ which are\\
            	$\hat{D_1}(-2,16), ~ \hat{D_2}(2,13), ~ \hat{D_3}(4,13), ~ \hat{D_4}(-4,16), ~ \hat{D_5}(-6,12)~$ and $~\hat{D_6}(-8,12).$
            	In the following figure, we compare this region to the Gershgorin region of the matrix $A^T$ made up of the discs\\
            	$D_1(-2,18), ~ D_2(2,22), ~ D_3(4,20), ~ D_4(-4,8), ~ D_5(-6,20)~$ and $~D_6(-8,18).$
            	\begin{figure}[H]
            		\centering
            		\includegraphics[scale=0.40]{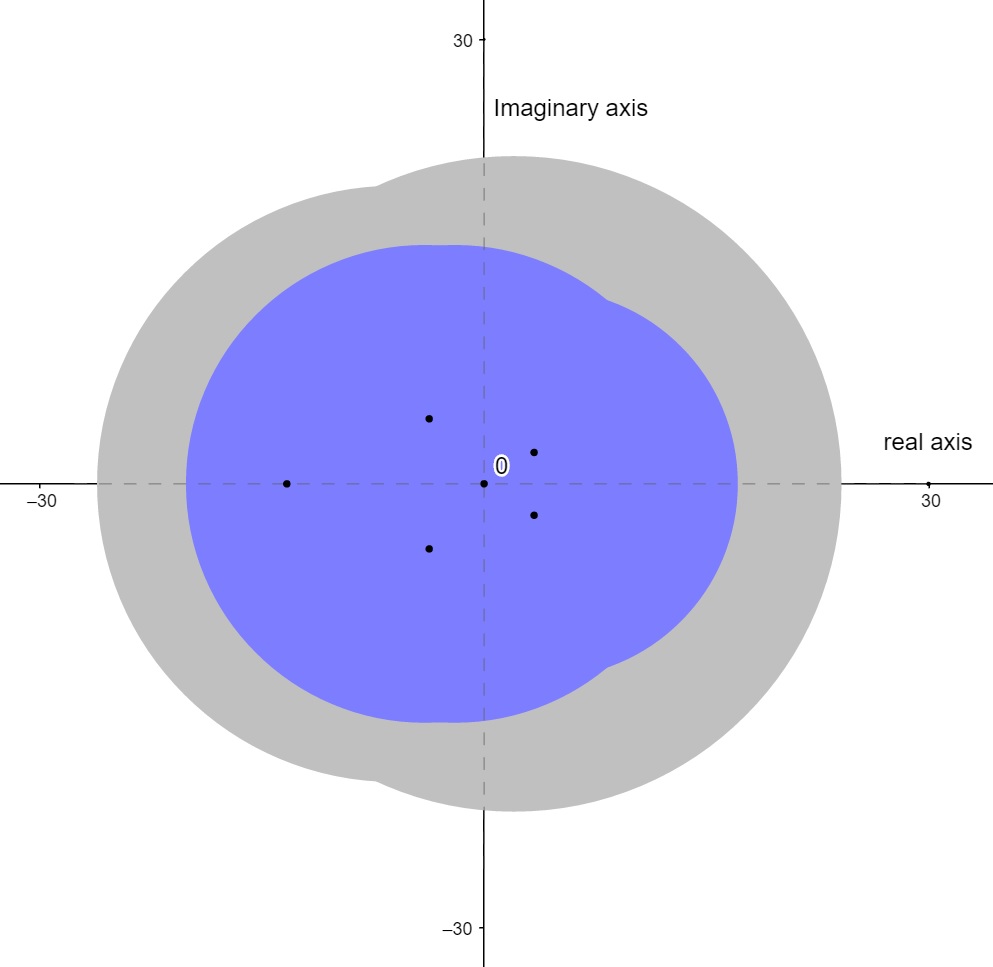}       		

            	\caption{The area in  blue is the \grs of $B^T$.
            		The original Gershgorin region of $A^T$ extends over the gray and  blue areas.
            		The points in black represent the eigenvalues of $A$.
            		Observe that the \grs of $B^T$ is a subset of the Gershgorin region of $A^T$ and all the eigenvalues of $A$ other than $\l=0$ belong to it. This is in accordance with Theorem \ref{t3}. The eigenvalue $\l=0$ also lies within this region for this particular matrix $A$, but this is not a consequence of Theorem \ref{t3}.}         		
            	\end{figure}
            \end{example}
        
        	\noindent
            This generalizes to any singular real matrix with an eigenvector with no zero entry corresponding to the eigenvalue $0$.\\
            
            \noindent
            In the  previous two examples, the \grs of $B^T$ is a subset of the Gershgorin region of $A^T$. Despite the fact that this is what we expect for most matrices, this is not always the case as shown by the following example.
            
            \begin{example}\label{ex11}
            	Let
            		$$
            	A\,=\ \left[ \begin{array}{r r r}
            	9   & 1  &  1      \\
            	0   & 5  &  5      \\
            	4   & 1  &  1                      	 
            	\end{array} \right].$$
            	This matrix has an eigenvector $v = (2,\, 1,\, 1)^T$ which we use to construct the following  matrix $B$ according to Corollary \ref{c1}:
            	$$
            	B\,=\ \left[ \begin{array}{r r r}
            	9   & 0.5   &  0.5     \\
            	0   & ~~5   &  ~~5     \\
            	8   & ~~1   &  ~~1                
            	\end{array} \right].$$           
            	It is easy to check that the \grs of $B^T$  is larger than the Gershgorin region of $A^T$.
            	\begin{figure}[H]
            		\centering
            		\includegraphics[scale=0.40]{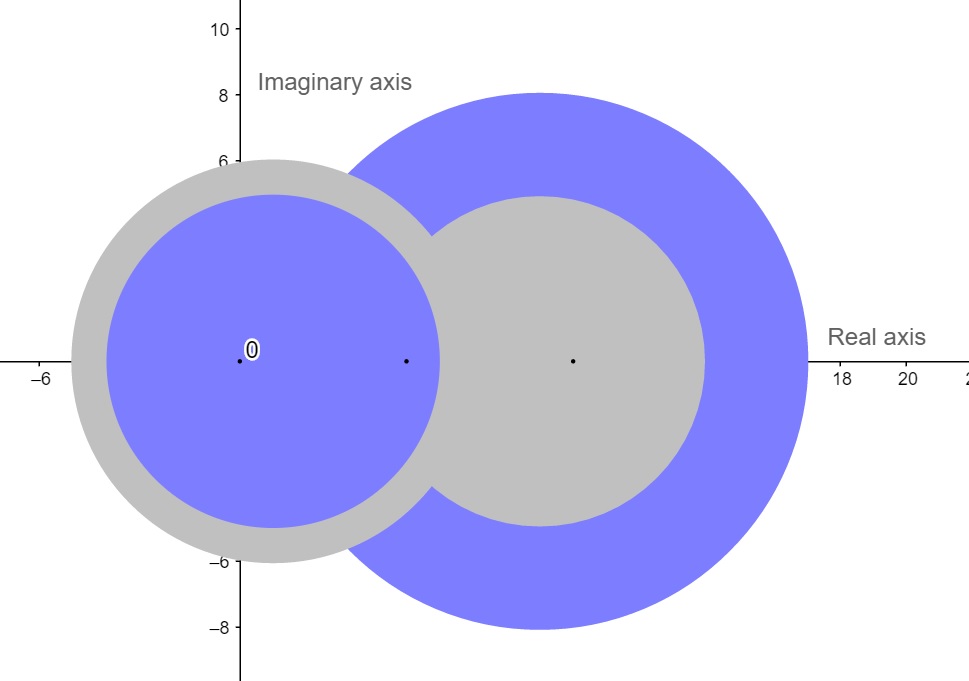}       		
            		\caption{The \grs of $B^T$ is the union of the two blue discs; %, the large of which appears behind some of the gray area;
            			the Gershgorin region of $A^T$ is the union of the two gray discs. %, the left one of which is behind the small blue disc.
            			From the figure it is clear that the \grs of $B^T$  is larger than the Gershgorin region of $A^T$.
            			Since neither of the regions is a subset of the other, their intersection provides a better inclusion set for the eigenvalues of  $A$ namely $0, 5$ and $10$ which are represented by the small dark points.} 	
            	\end{figure}
            \end{example}
        
        	\noindent
        	In general, we expect that the \grs of the matrix $B^T$ obtained by applying Corollary \ref{c1} to a large dense nonnegative irreducible $\n$ matrix $A$ is smaller than and contained within the Gershgorin region of $A^T$ for the following reason:
        	the radius of a Gershgorin disc is obtained by summing the absolute values of all off-diagonal elements in a given row or column, while the radius of a Gershgorin disc of the second type is obtained by taking the difference between the sum of the largest $\approx \frac{n}{2}$  and the sum of the smallest $\approx \frac{n}{2}$  off-diagonal elements in a given row or column (see Definition \ref{d1}).
        	In particular, if the eigenvector being used in generating the matrix $B$ is relatively flat (the components of $v$ are close to each other), then the chance is even bigger for the inclusion set given by Corollary \ref{c1} to be significantly smaller than and contained within the Gershgorin region of $A^T$.
        	Another reason for which we are interested to the inclusion regions given by Theorem \ref{t3} and Corollary \ref{c1} is that 
        	they can be improved further by applying some ideas from \cite{HM3}. This is what we shall discuss in the next section.
        	%    	
            %%%%%%%%%%%%%%%%%%%%%%%%%%%%%%%%%%%%%
            %%%%%%%%%%%%%%%%%%%%%%%%%%%%%%%%%%%%%
            %%%%%%%%%%%%%%%%%%%%%%%%%%%%%%%%%%%%%
    	\section{Further refinement of the inclusion regions}
           Let $A$ be $\n$ real matrix with an eigenvector $v$ with no zero component.
           The inclusion region given by Theorem \ref{t3} can be improved further by applying Theorem 3.10 and Theorem 3.15 from \cite{HM3}. The first one applies to constant row-sum matrices of even size and implies the following ideas.
           We note that this statement incorporates a small official journal correction to the original version.
       	\begin{theorem}\label{t4}
       	   Let n be an even integer and
       	   let $B = [b_{ij}]$ be an $n \times n$ real constant row-sum matrix.
       	   Let  $\l_1$ be its constant row-sum and let $e$ be the all $1$'s vector of $n$ components. 
       	   Let $L_j$ be the $j^{th}$ column of $B$ for $j=1, \dots, n$.
       	   Construct the matrix $F = [f_{ij}] = [ L_1 - \beta_1 e, \dots, L_n-\beta_n e]$,
       	   where $\beta_j$ is the $(\frac{n}{2})^{th}$ largest element among
       	   $b_{1j}, \dots, b_{j-1,j} , b_{j+1,j}, \dots, b_{nj}$.
       	   Then the  Gershgorin region of the second type of $F^T$ is a subset of that of $B^T$ and contains every eigenvalue of $B$ different from $\l_1$.       		      	
       \end{theorem}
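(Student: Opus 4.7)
The plan is to realize $F$ as a rank-one update of $B$ that is itself constant row-sum, transfer the spectrum via a standard deflation argument, and then apply Theorem \ref{t2} to $F$. Write $F = B - e\beta^T$ with $\beta = (\beta_1, \dots, \beta_n)^T$. Since $Be = \lambda_1 e$, we have $Fe = (\lambda_1 - \sum_{j=1}^n \beta_j)\,e$, so $F$ itself has constant row-sum, with trivial eigenvalue $\mu_0 := \lambda_1 - \sum_j \beta_j$. Brauer's rank-one update theorem, applied with the eigenpair $(\lambda_1, e)$ of $B$, shows that the spectrum of $F$ coincides with that of $B$ as multisets, except that $\lambda_1$ is replaced by $\mu_0$. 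In particular every eigenvalue of $B$ different from $\lambda_1$ is also an eigenvalue of $F$; Theorem \ref{t2} applied to $F$ then places all its non-trivial eigenvalues, and hence all eigenvalues of $B$ different from $\lambda_1$, in the \grs of $F^T$.

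It remains to verify that the \grs of $F^T$ is contained in that of $B^T$. I would argue disc by disc: for each $j$, the disc $\hat D_j(F^T)$ has center $f_{jj} = b_{jj} - \beta_j$ and radius $\hat r_j(F^T)$, while $\hat D_j(B^T)$ has center $b_{jj}$ and radius $\hat r_j(B^T)$. The two centers lie at distance $|\beta_j|$, so the inclusion $\hat D_j(F^T) \subseteq \hat D_j(B^T)$ is equivalent to the identity
\[
\hat r_j(B^T) \;=\; \hat r_j(F^T) + |\beta_j|.
\]

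This identity is the main obstacle, and the only place the choice $\beta_j = y_{n/2}$ really matters, where $y_1 \geq \dots \geq y_{n-1}$ are the sorted off-diagonal entries of column $j$ of $B$. Subtracting $\beta_j$ from the $y_k$ produces exactly $n/2 - 1$ nonnegative terms, a zero, and $n/2 - 1$ nonpositive terms; together with the diagonal-zero appended by Definition \ref{d1}, the sorted $n$-tuple has two zeros at its midpoint, the $\beta_j$-shifts cancel in the even-$n$ radius formula, and one obtains $\hat r_j(F^T) = \sum_{k=1}^{n/2-1} y_k - \sum_{k=n/2+1}^{n-1} y_k$. For $\hat r_j(B^T)$ one sorts $(y_1, \dots, y_{n-1}, 0)$ and splits on the sign of $\beta_j$: in both cases the inserted zero lands on the half opposite to $\beta_j$, contributing exactly the extra $|\beta_j|$ to the second-type radius formula. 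The per-disc containment then follows from the triangle inequality, and combined with the spectrum-transfer step this completes the proof.
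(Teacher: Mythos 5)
Your argument is correct in substance and is genuinely different from what the paper does: the paper gives no self-contained proof of Theorem \ref{t4}, deferring entirely to \cite[Theorem 3.10]{HM3}, whereas you reconstruct the result from ingredients internal to this paper, namely Brauer's rank-one deflation (the same device used later in the proof of Theorem \ref{t8}) together with Theorem \ref{t2} applied to $F$. Your radius computation checks out: writing $y_1\geq\dots\geq y_{n-1}$ for the sorted off-diagonal entries of column $j$ of $B$ and $\beta_j=y_{n/2}$, the shifted column of $F$ together with the appended zero sorts into $n/2-1$ nonnegative terms, two zeros, and $n/2-1$ nonpositive terms, so the $\beta_j$-shifts cancel and $\hat r_j(F^T)=\sum_{k=1}^{n/2-1}y_k-\sum_{k=n/2+1}^{n-1}y_k$; splitting on the sign of $\beta_j$ shows the zero appended for $B$ lands in the half opposite to $y_{n/2}$, giving exactly $\hat r_j(B^T)=\hat r_j(F^T)+|\beta_j|$, and since the centers differ by $\beta_j$ the triangle inequality yields the disc-by-disc containment $\hat D_j(F^T)\subseteq \hat D_j(B^T)$, which is stronger than the stated containment of the unions. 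What your route buys is a transparent, verifiable proof; what it costs is dependence on the exact wording of Theorem \ref{t2}.

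That dependence leaves one soft spot. After deflation, $F$ is constant row-sum with trivial eigenvalue $\mu_0=\lambda_1-\sum_j\beta_j$, and Theorem \ref{t2} locates only the eigenvalues of $F$ \emph{different from} $\mu_0$. If some eigenvalue $\lambda_i\neq\lambda_1$ of $B$ happens to coincide numerically with $\mu_0$, your ``hence'' in the first paragraph of the proposal says nothing about it. The standard patch is short: for $\lambda_i\neq\lambda_1$ there is a left eigenvector $u$ of $B$ with $u^Te=0$, and $u^TF=u^TB-(u^Te)\beta^T=\lambda_i u^T$, so $\lambda_i$ retains for $F$ a left eigenvector orthogonal to $e$; the proof of Theorem \ref{t2} in \cite{HM2} localizes precisely such eigenvalues, regardless of whether they equal the row-sum constant. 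With that one observation added, your proof is complete.
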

   		\begin{proof}
   			Look at \cite[Theorem 3.10]{HM3} and its proof.
   		\end{proof}
       \begin{example}\label{ex2}
       	   Let $A$ be as in Example \ref{ex1}.
       	   The constant row-sum matrix obtained from $A$ by application of Corollary \ref{c1} is
       	   $$B\,=\, \left[ \begin{array}{r r r r r r r r}
       	   10 & 2 &  4 &  2 & 3 &  3 \\
       	   4  & 6 &  6 &  2 & 4 &  2 \\
       	   2  & 4 &  8 &  4 & 2 &  4 \\
       	   0  & 6 &  8 &  4 & 0 &  6 \\
       	   8  & 4 &  6 &  0 & 2 &  4 \\
       	   2  & 4 &  6 &  2 & 4 &  6 
       	   \end{array} \right].$$
       	   Now we apply  Theorem \ref{t4} to $B$ to obtain the matrix F:% and $G$ which are:
       	   $$F\,=\ \left[ \begin{array}{r r r r r r r r}
       	   8  & -2 &  -2 &  0 &  0 &  -1 \\
       	   2  &  2 &   0 &  0 &  1 &  -2 \\
       	   0  &  0 &   2 &  2 & -1 &   0 \\
          -2  &  2 &   2 &  2 & -3 &   2 \\
       	   6  &  0 &   0 & -2 & -1 &   0 \\
       	   0  &  0 &   0 &  0 &  1 &   2        	
       	  \end{array} \right].$$
       	Since $A$ and $B$ are similar to each other, it follows by Theorem \ref{t4} that the non-Perron eigenvalues of $A$ are contained in the Gershgorin region of the second type of $F^T$ which is made up of the discs:
       	$$ \hat{D}_1(8 ,10) ~,~ \hat{D}_2(2,4) ~,~ \hat{D}_3(2,4) ~,~ \hat{D}_4(2,4) ~,~ \hat{D}_5(-1,6) \text{~~and~~} \hat{D}_6(2,5).$$
       	\begin{figure}[H]
       		\centering
       	\includegraphics[scale=0.45]{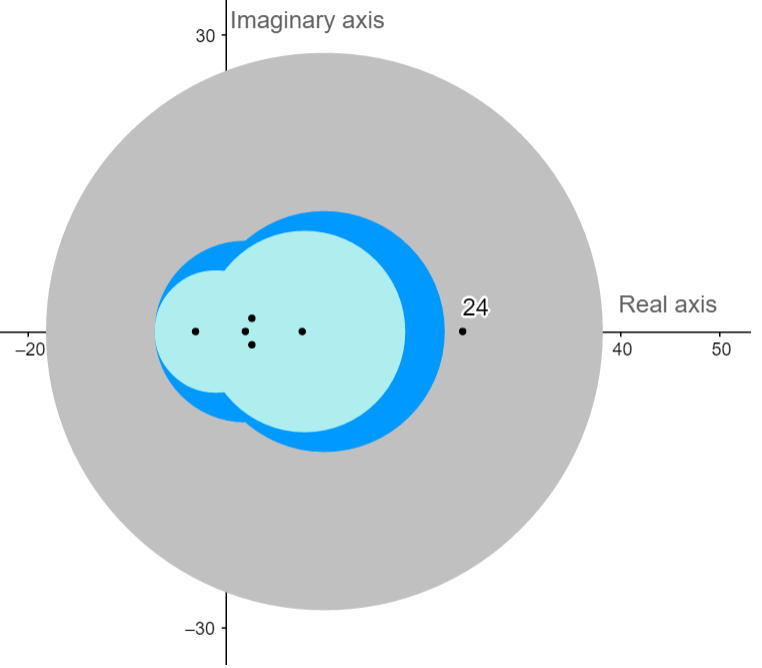}       		
       		\caption{The Gershgorin region of $A^T$ as well as the region given by Corollary \ref{c1} are as in the previous figure.
       			The Gershgorin region of the second type of $F^T$ is represented by the turquoise color. Observe that all the non-Perron eigenvalues of $A$ are contained within this region which is a subset of the   blue region.}
       	\end{figure}       	
       \end{example}
   
   		\noindent
      In the case of matrices of odd size, the region given by Theorem \ref{t3} can be enhanced by using the following theorem and corollary which are adapted versions of \cite[Theorem 3.15 and Corollary 3.16]{HM3}. 
       \begin{theorem}\label{t5}
       	Let n be an odd integer with $n\geq3$,
       	let $B = [b_{ij}]$ be an $n \times n$ real constant row-sum matrix and let  $\l_1$ be its constant row-sum.
       	Let $\beta_j$ and  $\gamma_j$ be, respectively, the opposite in sign of the $(\frac{n-1}{2})^{th}$
       	and the $(\frac{n+1}{2})^{th}$ largest numbers among $ b_{1 j} , \dots , b_{j-1 j} , b_{j+1 j} , \dots , b_{n j}$.
       	Construct the matrices $F = [f_{ij} = b_{ij}+\beta_j]$ and $G = [g_{i j}=b_{ij}+\gamma_j]$,
       	and let 
       	\begin{equation}\label{e3}
       	S = \underset{1\leq j \leq n}{{\bigcup}} \Big(\hat{D}_{F,j} \bigcap \hat{D}_{G,j}  \Big),
       	\end{equation}       	
       	where $\hat{D}_{F,j}$  and $\hat{D}_{G,j}$ are respectively, the Gershgorin discs of the second type
       	obtained from the $j^{th}$ columns of $F$ and $G$.
       	Then the region $S$ is contained within the Gershgorin region of the second type of $B^T$ and if $\l$ is an eigenvalue of $B$ different from $\l_1$, then $\l~\in~S$.       		
       \end{theorem}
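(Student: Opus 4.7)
The plan is to use the rank-one identity $F^T = B^T + \beta e^T$ (and likewise for $G^T$) to show that any eigenvector of $B^T$ for a non-trivial eigenvalue is automatically an eigenvector of $F^T$ and $G^T$ for the same eigenvalue, and then to apply the argument underlying Theorem \ref{t2} at a \emph{common} index $j$ to both matrices.

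First, for any eigenvalue $\lambda \neq \lambda_1$ of $B$ with right eigenvector $w$ of $B^T$, I would show that $\sum_i w_i = 0$. Since $Be = \lambda_1 e$, the vector $e$ is a left eigenvector of $B^T$ for $\lambda_1$, so the chain $\lambda(e^T w) = e^T(B^T w) = \lambda_1(e^T w)$ forces $e^T w = 0$. Writing $F = B + e \beta^T$ with $\beta = (\beta_1,\ldots,\beta_n)^T$, this gives $F^T w = B^T w + \beta(e^T w) = \lambda w$, and the analogous computation yields $G^T w = \lambda w$. Hence $w$ is a common right eigenvector of $B^T$, $F^T$, and $G^T$ for $\lambda$, still with $\sum_i w_i = 0$.

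Next, I would choose an index $j$ at which $|w_j|$ is maximal. From $F^T w = \lambda w$ the $j$-th scalar equation is $(\lambda - f_{jj}) w_j = \sum_{i \neq j} f_{ij} w_i$, and the side condition $\sum_{i \neq j} w_i = -w_j$ (equivalently, the ratios $t_i = w_i/w_j$ satisfy $|t_i| \leq 1$ and $\sum t_i = -1$) is precisely the hypothesis that drives the proof of Theorem \ref{t2} to conclude $|\lambda - f_{jj}| \leq \hat{r}_j^F$, i.e.\ $\lambda \in \hat{D}_{F,j}$. The identical argument applied to $G^T w = \lambda w$ with the \emph{same} index $j$ yields $\lambda \in \hat{D}_{G,j}$. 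Therefore $\lambda \in \hat{D}_{F,j} \cap \hat{D}_{G,j} \subseteq S$.

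To establish that $S$ is contained in the Gershgorin region of the second type of $B^T$, it suffices to prove $\hat{D}_{F,j} \cap \hat{D}_{G,j} \subseteq \hat{D}_{B^T,j}$ for each $j$. The three discs share a common axis: their centers $b_{jj}+\beta_j$, $b_{jj}+\gamma_j$, and $b_{jj}$ differ only by real shifts equal to the negatives of the two middle order statistics of column $j$ of $B$. The specific choice of $\beta_j$, $\gamma_j$ is made precisely so that the lens-shaped intersection $\hat{D}_{F,j} \cap \hat{D}_{G,j}$ is absorbed by $\hat{D}_{B^T,j}$---even though, as a simple $3 \times 3$ example shows, neither $\hat{D}_{F,j}$ nor $\hat{D}_{G,j}$ need individually be contained in $\hat{D}_{B^T,j}$.

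The main obstacle is this last step. Verifying $\hat{D}_{F,j} \cap \hat{D}_{G,j} \subseteq \hat{D}_{B^T,j}$ requires a case analysis on the sign pattern of the off-diagonals of column $j$ of $B$---tracking where the inserted zeros land in the sorted lists underlying $\hat{r}_j^F$, $\hat{r}_j^G$, and $\hat{r}_j^B$---followed by a geometric comparison of the three radii against the real distances between the corresponding disc centers. This is essentially the computation already carried out in the proof of Theorem 3.15 of \cite{HM3}, whose statement the present theorem adapts.
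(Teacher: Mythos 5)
Your proposal is correct and follows essentially the same route as the paper, whose own proof is nothing more than a pointer to \cite[Theorem 3.15]{HM3}: the rank-one identity $F = B + e\beta^{T}$ (so that an eigenvector $w$ of $B^{T}$ for $\lambda\neq\lambda_1$ satisfies $e^{T}w=0$ and is therefore a common eigenvector of $B^{T}$, $F^{T}$ and $G^{T}$), the choice of a single index $j$ of maximal $|w_j|$ feeding the Theorem~\ref{t2} argument for both matrices simultaneously so that $\lambda$ lands in $\hat{D}_{F,j}\cap\hat{D}_{G,j}$ for the \emph{same} $j$, and the order-statistic comparison of radii for the containment in the region of $B^{T}$ are exactly the ingredients of the cited proof. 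The one step you leave unexecuted, $\hat{D}_{F,j}\cap\hat{D}_{G,j}\subseteq\hat{D}_{B^{T},j}$, is also the only content the paper itself omits, deferring, as you do, to the computation in \cite{HM3}.
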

       \begin{proof}
       	Look at \cite[Theorem 3.15]{HM3} and its proof.
       \end{proof}
   		\begin{cor}\label{c2}
   			All the eigenvalues other than the trivial eigenvalue of $B$ lie in the intersection of the Gershgorin regions of the second type of $F^T$ and $G^T$.
   		\end{cor}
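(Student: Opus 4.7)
The plan is to derive Corollary \ref{c2} directly from Theorem \ref{t5} by means of a single set-theoretic inclusion, so no new spectral argument is needed. By Theorem \ref{t5}, every eigenvalue $\lambda$ of $B$ different from the trivial eigenvalue $\l_1$ already lies in
$S = \bigcup_{j=1}^n \bigl(\hat{D}_{F,j} \cap \hat{D}_{G,j}\bigr)$,
so it suffices to show that $S$ is contained in the intersection of the Gershgorin region of the second type of $F^T$ with that of $G^T$.

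The key observation is the elementary identity $\bigcup_j (X_j \cap Y_j) \subseteq \bigl(\bigcup_j X_j\bigr) \cap \bigl(\bigcup_j Y_j\bigr)$, valid for any two indexed families of sets. Applying this with $X_j = \hat{D}_{F,j}$ and $Y_j = \hat{D}_{G,j}$ yields $S \subseteq \bigl(\bigcup_j \hat{D}_{F,j}\bigr) \cap \bigl(\bigcup_j \hat{D}_{G,j}\bigr)$. By Definition \ref{d1}, the union $\bigcup_j \hat{D}_{F,j}$ is precisely the Gershgorin region of the second type of $F^T$, and analogously for $G^T$, so the right-hand side is exactly the intersection named in the statement. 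Combined with the fact that $\lambda \in S$, this gives $\lambda$ in the claimed intersection.

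There is essentially no obstacle in the argument; the only point that should be made explicit in writing is that the two regions on the right-hand side are indexed independently, whereas in the definition of $S$ the two discs $\hat{D}_{F,j}$ and $\hat{D}_{G,j}$ share the same index $j$. This is precisely why Corollary \ref{c2} is a genuine weakening of Theorem \ref{t5} (the witnessing column for $F^T$ is allowed to differ from the witnessing column for $G^T$), and it is also why the inclusion follows for free from the distributive-type set identity above. I would conclude the proof by remarking that equality need not hold, so in practice one should still prefer the sharper region $S$ whenever the individual column discs are available, while the intersection form in Corollary \ref{c2} is convenient when one wishes to compare two independently computed \gss\ regions.
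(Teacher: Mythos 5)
Your argument is correct and is essentially the paper's own proof: the paper's justification is the single observation that the region $S$ from (\ref{e3}) is a subset of the stated intersection, which is exactly the set-theoretic inclusion $\bigcup_j (\hat{D}_{F,j} \cap \hat{D}_{G,j}) \subseteq \bigl(\bigcup_j \hat{D}_{F,j}\bigr) \cap \bigl(\bigcup_j \hat{D}_{G,j}\bigr)$ that you spell out, combined with Theorem \ref{t5}. Your additional remarks on why the inclusion can be strict match the paper's own comment that the region of Corollary \ref{c2} is larger than or equal to that of Theorem \ref{t5}.
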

   		\begin{proof}   	 		
   			The region given by (\ref{e3}) is a subset of this intersection. Look at \cite[Theorem 3.15 and Corollary 3.16 ]{HM3} and their proofs.
   		\end{proof}
   	
   		\noindent
   		The region given by Corollary \ref{c2} is larger than or equal the one given by Theorem \ref{t5}. However it is considered for its relatively simple form. Its graph can by done by graphing the entire regions of $F^T$ and $G^T$, then taking their intersection.
       \begin{example}\label{ex9}
       	Let $$A\,=\, \left[ \begin{array}{r r r r r r r r}
       	2  & 3 &  6 &  9 & 6 &  6  & 6 \\
       	2  & 2 &  4 &  0 & 4 &  6  & 6 \\
       	0  & 1 &  3 &  2 & 4 &  2  & 2 \\
       	2  & 1 &  2 &  0 & 2 &  1  & 2 \\
       	1  & 2 &  1 &  3 & 0 &  3  & 1 \\
       	2  & 0 &  1 &  3 & 1 &  4  & 0 \\
       	0  & 3 &  3 &  2 & 1 &  2  & 1 
       	\end{array} \right].$$
       	The spectrum of $A$ is, approximately, $\{15, -3.48\pm 0.66i,~ 2.07\pm 2.30i,~  -0.09\pm 1.10i \}$, where the Perron eigenvalue of $A$ is exactly $\l_p = 15$. The Perron eigenvector of $A$ is $v_p = (3,\, 2,\, 1,\, 1,\, 1,\, 1,\, 1)^T$. The constant row-sum matrix obtained by applying Corollary \ref{c1} to $A$ is
       	$$B\,=\, \left[ \begin{array}{r r r r r r r r}
       	2  & 2 &  2 &  3 & 2 &  2 & 2 \\
       	3  & 2 &  2 &  0 & 2 &  3 & 3 \\
       	0  & 2 &  3 &  2 & 4 &  2 & 2 \\
       	6  & 2 &  2 &  0 & 2 &  1 & 2 \\
       	3  & 4 &  1 &  3 & 0 &  3 & 1 \\
       	6  & 0 &  1 &  3 & 1 &  4 & 0 \\
       	0  & 6 &  3 &  2 & 1 &  2 & 1
       	\end{array} \right].$$
       	The matrix $B$ is similar to $A$ and therefore has the same spectrum, with $\l_p$ being equal to its constant row-sum.
       	According to Corollary \ref{c1}, all the non-Perron eigenvalues of $A$ are in the Gershgorin region of the second type of $B^T$.
       	This region can be refined by applying Theorem \ref{t5} to $B$ to obtain the matrices 
       	       	%\text{~~and~~}
       	$$F\,=\ \left[ \begin{array}{r r r r r r r r}
       	-1  & 0 &  0 &  1 &  0 &  0 &  0  \\
       	 0  & 0 &  0 & -2 &  0 &  1 &  1  \\
       	-3  & 0 &  1 &  0 &  2 &  0 &  0  \\
       	 3  & 0 &  0 & -2 &  0 & -1 &  0  \\
       	 0  & 2 & -1 &  1 & -2 &  1 & -1  \\
       	 3  &-2 & -1 &  1 & -1 &  2 & -2  \\
       	-3  & 4 &  1 &  0 & -1 &  0 & -1
       	\end{array} \right].$$    
       	and
       	$$G\,=\ \left[ \begin{array}{r r r r r r r r}
       	-1  & 0 &  0 &  0 &  0 &  0 &  0  \\
       	 0  & 0 &  0 & -3 &  0 &  1 &  1  \\
       	-3  & 0 &  1 & -1 &  2 &  0 &  0  \\
       	 3  & 0 &  0 & -3 &  0 & -1 &  0  \\
       	 0  & 2 & -1 &  0 & -2 &  1 & -1  \\
       	 3  &-2 & -1 &  0 & -1 &  2 & -2  \\
       	-3  & 4 &  1 & -1 & -1 &  0 & -1        	
       	\end{array} \right]$$
       	According to Theorem \ref{t5}, the non-Perron eigenvalues of $A$ are also contained in the region $S$ which is given by (\ref{e3}) and reduces, for this particular example, to the disc $ \hat{D}_{F,1}(2 ,15)$ with center $2$ and radius $15$.
       	\begin{figure}[H]
       		\centering
       		%\hskip35mm
       		\includegraphics[scale=0.40]{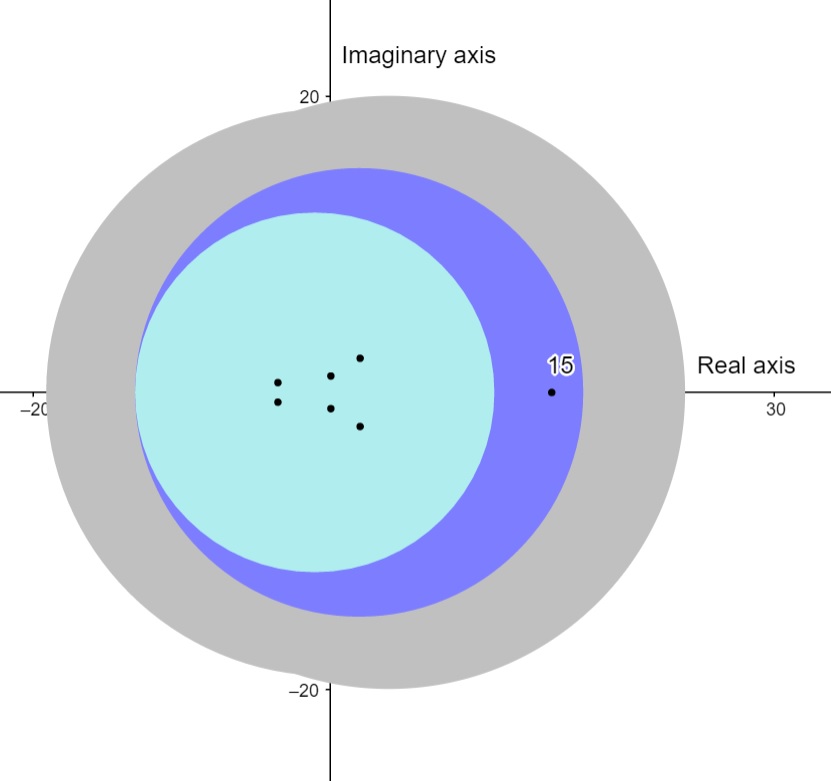}       		
       		\caption{The area in turquoise is the region given by Theorem \ref{t5}. For this particular example it is the same as the area given by Corollary \ref{c2}.
       			The \grs of $B^T$ (given by Corollary \ref{c1}) extends over the  blue and turquoise areas.
       			The original Gershgorin region of $A^T$ extends over the gray,  blue and turquoise areas.
       			The points in black represent the non-Perron eigenvalues of $A$. Observe that all of them are contained within the turquoise area given by Theorem \ref{t5}.}  
       		\end{figure}       
       \end{example}
   		%   		
  			%%%%%%%%%%%%%%%%%%%%%%%%%%%%%%%%%%%
  			%%%%%%%%%%%%%%%%%%%%%%%%%%%%%%%%%%%
  			%%%%%%%%%%%%%%%%%%%%%%%%%%%%%%%%%%%   			
  			\section{Second case: some components of $v$ are equal to zero}
  			Let $(\lambda, v)$ be a known real eigenpair of the $n \times n$ real matrix $A$, where $v$ has exactly $k$ components equal to $0$ for some integer $k\geq1$.
  			There is an $\n$ permutation matrix $P$ such that
  			\begin{equation}\label{e9}
  			v' = Pv = (0, \ldots, 0, v_{k+1}, \ldots,  v_n)^T.
  			\end{equation}
  			The last $(n  -  k)$ components of $v'$ are nonzero.
  			Let
  			\begin{equation}\label{e7}
  			S\,=\ \left[ \begin{array}{r r}
  			I_k  & M   \\
  			0    & I_{n-k}                      	                     	 
  			\end{array} \right],
  			\end{equation}  				  
  			where   $M$ is  $k \times  (n-k)$  with first column all $1$'s and all other entries equal to $0$.
  		    Then  $A$ is similar to the matrix   $C = SPAP^TS^{-1}$ which has  eigenvector $w =Sv'$
  		    since $Cw = (SPAP^TS^{-1}) (SPv) = \l SPv = \l w $.
  		    Now, $w  = Sv' =   (v_{k+1}, \ldots ,  v_{k+1}, v_{k+1},\ldots , v_n)^T$   has no zero entries.
  			Note that the last $(n - k)$ components of $v$ and $w$ are the same.
  			Moreover, $S$ is nonsingular and no computation is needed to find $S^{-1}$ as it is given by
  			\begin{equation}\label{e10}
  			S^{-1}\,=\ \left[ \begin{array}{r l }
  			I_k  & -M   \\
  			0    & I_{n-k}                      	                     	 
  			\end{array} \right].
  			\end{equation}
  			We arrive at the following result which can be used together with the theorems stated in the preceding sections to locate the remaining eigenvalues of $A$.
  			\begin{theorem}\label{t10}
  				Suppose that $(\lambda, v)$ is a real eigenpair of the $n \times n$ real matrix $A$, where $v$ has at least one zero component
  				and let the matrices $P$ and $S$ be as in (\ref{e9}) and (\ref{e7}).                
  				Then the matrix $C = S PAP^T S^{-1}$ is similar to $A$ and has eigenpair $(\l, SPv)$, where every component of the vector $SPv$ is real and nonzero.  				
  			\end{theorem}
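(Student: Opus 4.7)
The proof plan follows essentially the narrative that has already been laid out in the paragraphs preceding the theorem statement; the task is to organize it into three clean verifications: (i) $C$ is similar to $A$, (ii) $(\lambda, SPv)$ really is an eigenpair of $C$, and (iii) the vector $SPv$ has only nonzero real components.

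First, I would observe that similarity is essentially formal. The matrix $P$ is a permutation matrix, so $P^T = P^{-1}$ and conjugation by $P$ is a similarity. The matrix $S$ in (\ref{e7}) is block upper triangular with unit diagonal blocks, hence $\det S = 1$, and its inverse is given explicitly by (\ref{e10}) (one checks the block product yields $I_n$ using $M - M = 0$). Consequently $C = (SP)A(SP)^{-1}$, since $(SP)^{-1} = P^{-1}S^{-1} = P^TS^{-1}$, and this is by definition a similarity transformation; in particular $C$ and $A$ share the same spectrum.

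Second, I would verify the eigenpair relation by direct computation:
\begin{equation*}
C(SPv) = (SPAP^TS^{-1})(SPv) = SPA(P^TP)v = SP(Av) = SP(\lambda v) = \lambda(SPv),
\end{equation*}
where $P^TP = I$ and $S^{-1}S = I$ are used.

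Third, and this is the only step that requires looking at the block structure rather than abstract properties, I would show that $w := SPv$ has all nonzero real entries. By construction of $P$ in (\ref{e9}) we have $Pv = v' = (0,\ldots,0,v_{k+1},\ldots,v_n)^T$ with $v_{k+1},\ldots,v_n\neq 0$. Writing
\begin{equation*}
Sv' = \begin{pmatrix} I_k & M \\ 0 & I_{n-k}\end{pmatrix}\begin{pmatrix} 0 \\ (v_{k+1},\ldots,v_n)^T\end{pmatrix} = \begin{pmatrix} M(v_{k+1},\ldots,v_n)^T \\ (v_{k+1},\ldots,v_n)^T\end{pmatrix},
\end{equation*}
and using the fact that the first column of $M$ is the all $1$'s vector in $\mathbb{R}^k$ while every other column of $M$ is zero, one computes $M(v_{k+1},\ldots,v_n)^T = v_{k+1}(1,\ldots,1)^T$. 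Hence $w = (v_{k+1},\ldots,v_{k+1},v_{k+1},v_{k+2},\ldots,v_n)^T$, all of whose entries are nonzero real numbers since $v_{k+1},\ldots,v_n$ are, and $A$, $\lambda$, $v$, $P$, $S$ all have real entries.

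None of the three steps presents any real obstacle; the only mildly delicate point is keeping track of the block multiplication in step three and exploiting the specific choice that $M$ has its all $1$'s vector in the \emph{first} column (so that the zero entries of $v'$ in positions $1,\ldots,k$ become copies of the single nonzero value $v_{k+1}$, rather than, say, a sum that could accidentally vanish). Once this is noted, the theorem is immediate.
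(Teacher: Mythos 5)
Your proposal is correct and follows essentially the same route as the paper, whose proof consists of the computation in the paragraph immediately preceding the theorem: the similarity via $(SP)^{-1}=P^TS^{-1}$, the verification $C(SPv)=\lambda(SPv)$, and the observation that $SPv=(v_{k+1},\ldots,v_{k+1},v_{k+1},\ldots,v_n)^T$ has no zero entries. You merely make explicit a few details the paper leaves implicit (the block computation $M(v_{k+1},\ldots,v_n)^T=v_{k+1}(1,\ldots,1)^T$ and the role of $M$'s first column), which is fine.
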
 			
  			
  			\bigskip
  			\begin{example}\label{ex8}
  				Let $$A\,= \left[ \begin{array}{r r r r }
  				 7  &  -10  & -2 &  2   \\
  				 5  &   -8  & -2 &  2   \\
  				-5  &   12  &  4 & -4   \\
  				-1  &    4  &  1 & -1                      	                     	 
  				\end{array} \right].$$
  				Then $0$ is an eigenvalue of $A$ with corresponding eigenvector $v= (0, 0, 1, 1)^T$.
  				The other eigenvalues of $A$ are $-1 , 1$ and $2$.
  				Let $$ S = \, \left[ \begin{array}{r r r r }
  				1  &    0 &  1 &  0   \\
  				0  &    1 &  1 &  0   \\
  				0  &    0 &  1 &  0   \\
  				0  &    0 &  0 &  1                      	                     	 
  				\end{array} \right].$$
  				Then $Sv = e$ and the matrix $A$ is similar to
  				$$ C = S A S^{-1} = \, \left[ \begin{array}{r r r r }
  				2  &    2 &  -2 & -2   \\
  				0  &    4 &  -2 & -2   \\
  				-5  &   12 &  -3 & -4   \\
  				-1  &    4 &  -2 & -1                      	                     	 
  				\end{array} \right], $$
  				which is a constant row-sum matrix since $Ce = S A S^{-1} S v = 0$.
  				To improve the location of eigenvalues, we apply Theorem \ref{t4} to $C$ to obtain the matrix
  				$$ F = \, \left[ \begin{array}{r r r r }
  				3  &  -2 &  0 &  0   \\
  				1  &   0 &  0 &  0   \\
  				-4  &   8 & -1 & -2   \\
  				0  &   0 &  0 &  1                      	                     	 
  				\end{array} \right]. $$
  				\begin{figure}[H]
  					\centering
  					%\hskip15mm
  					\includegraphics[scale=0.40]{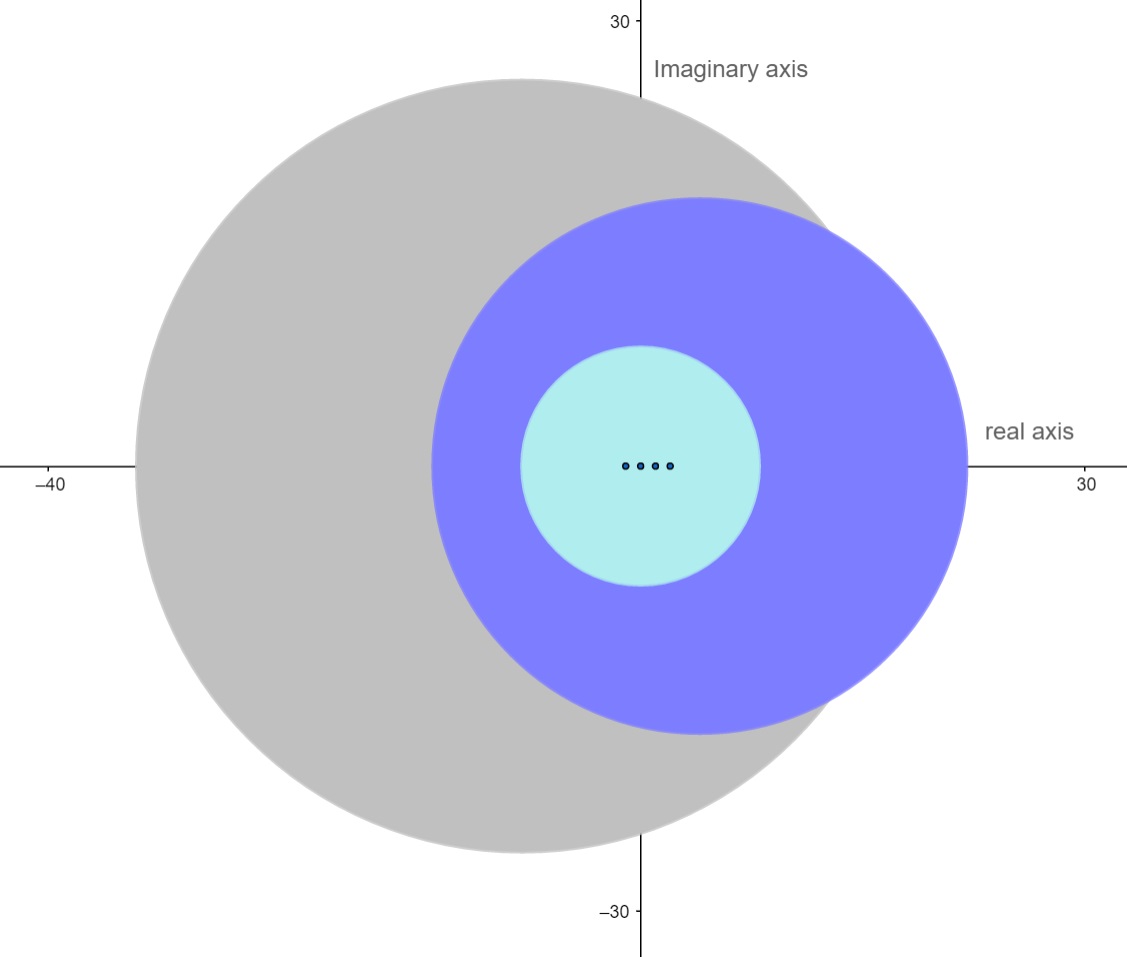}       		
  					\caption{The Gershgorin region of $A^T$ is in gray, the Gershgorin region of the second type of $C^T$ is in blue and
  						the Gershgorin region of the second type of $F^T$ is in turquoise.
  						The four small points represent the eigenvalues of $A$.}
  				\end{figure}       	
  			\end{example}
  			
  			\bigskip               
  			\begin{rem}
  				Note that Theorem \ref{t10} together with Theorem \ref{t1} actually hold for complex matrices, so that
  				every $\n$ complex matrix is similar to some constant row-sum matrix.
  				In fact, there are other ways to obtain a constant row-sum matrix that is similar to a given $\n$ complex matrix $A$. 
  				If $v$ is an eigenvector of $A$ associated with some eigenvalue $\l$,  % and some of its components are equal to zero,
  				then we can find infinitely many nonsingular matrices that satisfy the equation $Sv = e$. It follows that $C = SAS^{-1}$ is a constant row-sum matrix since $ C e = C (Sv) = \l (Sv) = \l e$.
  				Some of the matrices that satisfy $Se =v$ can be easily found. However, in general, the inverse of $S$ may be costly in terms of computation which makes Theorem \ref{t10} interesting since it uses matrices $P$ and $S$ whose inverses are, respectively, $P^T$ and the simply structured matrix $S^{-1}$ given by (\ref{e10}).  
  			\end{rem}
  			% 
  			%%%%%%%%%%%%%%%%%%%%%%%%%%%%%%%%%%%
  			%%%%%%%%%%%%%%%%%%%%%%%%%%%%%%%%%%%
  			%%%%%%%%%%%%%%%%%%%%%%%%%%%%%%%%%%% 
  			\section{Comparison with other types of inclusion sets}
  			As mentioned in the introduction, the idea of using a known eigenpair to locate the spectrum of a given matrix was explored recently in \cite{M} by A. Melman. A main result in \cite{M} was achieved by combining Gershgorin's and Brauer's theorems, \cite{G} and \cite{B2}.
  			To be able to make a transparent comparison between  the location sets obtained in our work and those obtained in \cite{M}, here is a brief summary of the ideas behind the locations sets obtained in \cite{M}.\\  
  			If the spectrum of the $\n$ real matrix $A$ (counting algebraic multiplicities) is $\{ \l_1, \l_2, \dots, \l_n\}$
  			and $v = (v_1, v_2, \dots, v_n)^T$ is an eigenvector of $A$ associated with $\l_1$,
  			then by Brauer's theorem the matrix $A - vz^T$ has spectrum $\{ \l_1 - z^Tv, \l_2, \dots, \l_n\}$
  			for every  %$z =(z_1 , z_2, \dots, z_n)^T \in \mathbb{C}^n$.
  			$z \in \mathbb{C}^n$.
  			A main idea in $\cite{M}$ is to find a specific vector $x$ that allows for the best Gershgorin region among all matrices 
  			in $\big\{A^T - zv^T | z\in \mathbb{C}\big\}$.
  			Fortunately this is algebraically possible if the eigenpair $(\l_1,v)$ is real and there is an optimization theorem that allows for finding $x$ which is by the way a real vector in this case.
  			(For more details, look at \cite{M}.)\\
  			We remind the reader that the eigenvalues of every $\n$ complex matrix  $M = [m_{ij}]$ lie in its Ostrowski-Brauer set given by
  			\begin{equation}\label{e21}
  			\Gamma(M) = \bigcup_{\stackrel{1\leq i,j \leq n}{i<j}} \Big\{ y\in \mathbb{C}: |y-m_{ii}| |y-m_{jj}|\leq 
  			\sum_{\stackrel{1\leq k \leq n}{k\neq i}} |m_{ik}| \sum_{\stackrel{1\leq k \leq n}{k\neq j}} |m_{jk}| \Big\}.
  			\end{equation}  			
  			\noindent 			
  			It is difficult to have a complete idea on how the location sets given by Theorems \ref{t3}, \ref{t4} and \ref{t5} compare in size with those obtained in \cite{M} for every matrix. Nevertheless, we do the comparison for two matrices used in $\cite{M}$.
  			\begin{example}\label{ex12}
  			We consider the  $3\times 3$ matrices
  			$$A_1 \,=\ \left[\begin{array}{r r r r}
  			0   & 1 &  0   \\
  			2   & 5 &  4  \\
  			0   & 3 &  0    
  			\end{array}\right]
  			\text{~~~and~~~} 
  			A_2 \,=\ \left[\begin{array}{r r r r}
  			12   & 6  &   6    \\
  			3   & 3  &   18   \\
  			8   & 8  &   8    
  			\end{array}\right]
  			$$
  			which are used, respectively, in \cite[Example 1]{M} and \cite[Example 2]{M}.
  			A given eigenpair of $A_1$ consists of $\l_1 = 7$ and $v = (1, 7, 3)^T$. The other eigenvalues are $0$ and $-2$.
  			Matrix $A_2$ is constant row-sum with $A_2 e = 24$ and its other eigenvalues are $-6$ and $5$. 
  			By applying Theorem \ref{t3} then Theorem \ref{t5} to $A_1$  we obtain two matrices $F$ and $G$ the intersection of whose Gershgorin regions of the second type gives a location set for $0$ and $-2$. In a similar fashion we obtain a location set for the eigenvalues $-6$ and $5$ of $A_2$.  			
  			\begin{figure}[H]
  				\centering
  				\begin{subfigure}{0.35\textwidth}\label{fig7a} 
  					\includegraphics[scale=0.45]{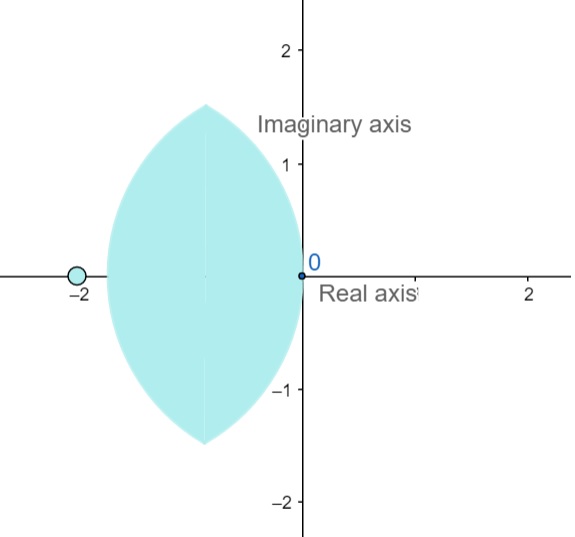} 
  					\caption{Inclusion set for the eigenvalues $0$ and $-2$ of $A_1$.% 
  					}          			          			        
  				\end{subfigure}
  				\hskip20mm
  				\begin{subfigure}{0.35\textwidth}\label{fig7b}
  					\centering           			
  					\includegraphics[scale=0.30]{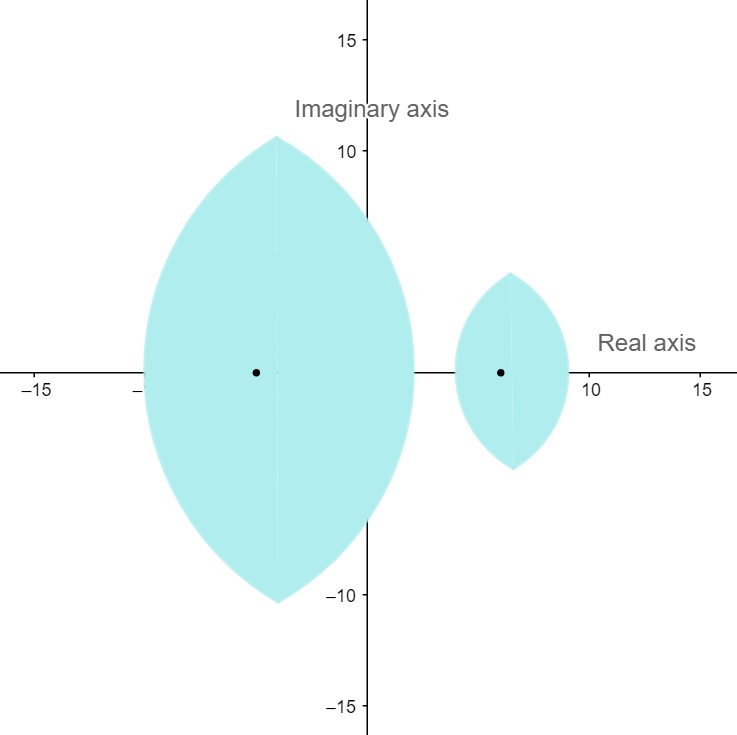}  						
  					\caption{Inclusion set for the eigenvalues $-6$ and $5$ of $A_2$}.          		
  				\end{subfigure}  
  				\caption{}
  			\end{figure}
  			
  			\noindent
  			Comparing Figure\,7\,(a) to the left hand side of \cite[Figure\,2]{M}, we can see that all location sets are close in size to each other, with the inclusion set represented in Figure\,7\,(a) being smaller  in size than the Gershgorin regions obtained in \cite[Figure\,2]{M}.
  			A similar conclusion is made when comparing Figure\,7\,(b) with the left hand side of \cite[Figure\,3]{M}.\\
  			Note that these conclusions hold for these particular matrices and cannot be considered general facts. 
  		\end{example}
  			
  			\noindent
  			For comparison related to Ostrowski-Brauer sets, we take into consideration that each of the matrices $F$ and $G$ from (Theorems \ref{t4} and \ref{t5}) has its own Ostrowski-Brauer set and their intersection can be compared to the Ostrowski-Brauer sets obtained in \cite{M}. However, we need first to show that $\l_2, \l_3, \dots, \l_n$ are eigenvalues of $F$ as well as of $G$.			
  			\begin{theorem}\label{t8}
  				Let $A$ be an $\n$ real matrix with spectrum $\{\l_1 , \l_2, \dots, \l_n\}$ (counting algebraic multiplicities).
  				Suppose that $\l_1$ is real and associated with a real eigenvector $v$ which has no zero components.
  				Let $B = S^{-1} A S$, where $S =$ diag$(v)$. Let $F$ and $G$ be the matrices obtained from $B$ in Theorems \ref{t4} and \ref{t5}. 
  				Then, for $i=2, 3, \dots, n$, we have
  				\begin{equation}\label{e22}
  				\l_i \in \Gamma(F) \cap \Gamma(F^T), \text{~~ if~~} n \text{~~is even}
  				\end{equation}
  				and
  				\begin{equation}\label{e23}
  				\l_i \in \Gamma(F) \cap \Gamma(F^T) \cap \Gamma(G) \cap \Gamma(G^T), \text{~~ if~~} n \text{~~is odd}.
  				\end{equation} 		 		
  			\end{theorem}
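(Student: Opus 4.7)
\medskip
\noindent
\textbf{Proof proposal.} The key observation is that the matrices $F$ and $G$ constructed in Theorems~\ref{t4} and~\ref{t5} are rank-one column modifications of $B$. Writing $\beta = (\beta_1,\dots,\beta_n)^T$ and $\gamma = (\gamma_1,\dots,\gamma_n)^T$, one checks directly from the definitions that
\begin{equation*}
F = B - e\beta^T \quad\text{or}\quad F = B + e\beta^T,
\end{equation*}
depending on the sign convention used in Theorems~\ref{t4} or~\ref{t5}, and analogously $G = B \pm e\gamma^T$ in the odd case. Because $B$ is constant row-sum with $Be = \lambda_1 e$, the vector $e$ is the eigenvector of $B$ associated to $\lambda_1$, so Brauer's theorem (invoked in Section~5) applies to each of these rank-one updates.

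The plan is therefore as follows. First, I would spell out the identifications $F = B \mp e\beta^T$ and $G = B \mp e\gamma^T$ and then apply Brauer's theorem with $v = e$ to conclude that the spectrum of $F$ is $\{\lambda_1 \mp \beta^T e,\, \lambda_2,\dots,\lambda_n\}$ and the spectrum of $G$ is $\{\lambda_1 \mp \gamma^T e,\, \lambda_2,\dots,\lambda_n\}$. Since $A$ and $B = S^{-1}AS$ are similar, the eigenvalues $\lambda_2,\dots,\lambda_n$ of $A$ are genuine eigenvalues of both $F$ and $G$.

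Next, I would invoke the classical Ostrowski--Brauer theorem (formula~(\ref{e21})) for each of the matrices $F$ and $G$. Since any matrix and its transpose have the same spectrum, the Ostrowski--Brauer inclusion also applies to $F^T$ and $G^T$, so for every $i \ge 2$,
\begin{equation*}
\lambda_i \in \Gamma(F) \cap \Gamma(F^T)
\quad\text{and, in the odd case,}\quad
\lambda_i \in \Gamma(G) \cap \Gamma(G^T).
\end{equation*}
Intersecting these inclusions gives~(\ref{e22}) when $n$ is even (since only $F$ is constructed in Theorem~\ref{t4}) and~(\ref{e23}) when $n$ is odd.

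I do not anticipate a serious obstacle: the argument is essentially a repackaging of Brauer's rank-one spectrum-shift principle together with the Ostrowski--Brauer inclusion. The only point that requires care is bookkeeping the sign conventions so that in every case the coefficient multiplying $e$ on the right of $B$ corresponds to a column update of the form $e\beta^T$ (or $e\gamma^T$), which is what makes Brauer's theorem with eigenvector $e$ directly applicable and thereby preserves $\lambda_2,\dots,\lambda_n$.
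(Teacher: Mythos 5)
Your proposal is correct and follows essentially the same route as the paper: identify $F$ (and $G$) as rank-one updates of $B$ of the form $B \mp e\beta^T$, apply Brauer's theorem with the eigenvector $e$ of the constant row-sum matrix $B$ to conclude that $\lambda_2,\dots,\lambda_n$ remain eigenvalues of $F$ and $G$, and then invoke the Ostrowski--Brauer inclusion for each of $F, F^T, G, G^T$. The sign bookkeeping you flag is indeed the only point of care, and it is harmless since Brauer's theorem applies for an arbitrary vector $z$ in $B - ez^T$.
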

  			\begin{proof}
  			In the case $n$ is even, observe that  $F$ is obtained in Theorem \ref{t4} by subtracting from every column of $B$  a real multiple of the vector $e$. That is 
  			\begin{equation}
  			F = B - [\alpha_1 e ~~ \alpha_2 e ~~ \dots ~~ \alpha_n e],
  			\end{equation} 
  			where $\a = (\alpha_1, \alpha_2, \dots, \alpha_n)^T \in \mathbb{R}^n$.
  			Hence $F$ has the form $F = B - e\a^T$ and it follows by Brauer's Theorem that
  			$\l_2, \l_3, \dots, \l_n$ are eigenvalues of $F$. Hence, $\l_i \in \Gamma(F) \cap \Gamma(F^T)$ for $i=2, 3, \dots, n$.\\
  			In the case where $n$ is odd,  we have
  			$F = B - e\a^T$ and $G = B - e\beta^T$ for some $\a$ and $\beta \in \mathbb{R}^n$ (look at Theorem \ref{t5} to see how $F$ and $G$ are obtained).  			
  			It follows by Brauer's Theorem that $\l_2, \l_3, \dots, \l_n$ are eigenvalues of each of $F$ and $G$.
  			Hence, they lie in each of $\Gamma(F), \Gamma(F^T), \Gamma(G)$ and $\Gamma(G^T)$ and consequently in their intersection.  		
  			\end{proof}
  			\noindent 			
  			\begin{example}\label{ex13}
  				We look again at the matrices $A_1$ and $A_2$ in the previous example, which are also given in \cite[Example 1]{M} and \cite[Example 2]{M}.
  				For the matrix $A_1$, by using (\ref{e21}) we find that $\Gamma(F)\cap\Gamma(F^T) = \{-2 , 0\}$ which is perfect since it consists of two points only. Note that all three eigenvalues of $F$ are in this set since $0$ is an eigenvalue of $F$ with multiplicity $2$.  In fact, these eigenvalues can be obtained simply by looking at $F$:
  				$$
  				G \,=\ \left[\begin{array}{r r r r}
  				0     &  0 &  0   \\
  				2/7   & -2 &  12/7  \\
  				0     &  0 &  0    
  				\end{array}\right].
  				$$
  				For the matrix $A_2$, by using  (\ref{e21}) and Theorem \ref{t8}, we obtain the following location set.
  					\begin{figure}[H]
  					\centering
  					\includegraphics[scale=0.40]{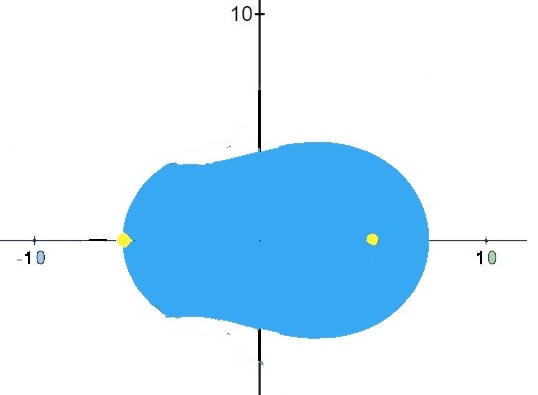}       		
  					\caption{The colored area represents $\Gamma(F) \cap \Gamma(F^T) \cap\Gamma(G) \cap \Gamma(G^T)$. The two points in yellow are the eigenvalues $-6$ and $5$ of $A_1$.}
  				 \end{figure}
  			
 					\noindent 			
  					We can see that the region obtained here is close in size to the Ostrowski-Brauer sets obtained in \cite[Figure 3]{M}. (In \cite[Figure 3]{M}, the parts of the Ostrowski-Brauer set are each enclosed within a rectangle.)\\
  					
  					\noindent
  					At the end of this section we highlight the following points.
  					\begin{enumerate}
  						\item From the algebraic point of view, the discs given by Theorems \ref{t3}, \ref{t4} and \ref{t5} have the advantage that the radii have simple explicit linear algebraic forms given in terms of the entries of the matrix $A$. (See Definition \ref{d1}.)
  						\item In terms of computations, we didn't see much difference since, for all types of discs obtained here and in \cite{M}, the calculation of the radii can be done with a number of operations of order $O(n)$.
  						\item As we mentioned earlier, we would like to emphasize that the location sets given by Theorems \ref{t3}, \ref{t4} and \ref{t5} allow for a significant reduction of the Gershgorin region of the transpose of a large matrix $A$ if this matrix is dense and the elements in each row (or in each column for $A^T$) are relatively close to each other. This fact can be clearly seen from Definition \ref{d1}.
  						\item Which location is the best depends on the matrix being studied. One may actually consider the intersection of all of them.
  					\end{enumerate}
  			\end{example} 				 	
  			%%%%%%%%%%%%%%%%%%%%%%%%%%%%%%%%%%%
  			%%%%%%%%%%%%%%%%%%%%%%%%%%%%%%%%%%%
  			%%%%%%%%%%%%%%%%%%%%%%%%%%%%%%%%%%% 			
  			\section{Bounding the largest in absolute value of the remaining eigenvalues of $A$}
  			% 
  			%%%%%%%%%%%%%%%%%%%%%%%%%%%%%%%%%%%%%%%%%%%%
  			%%%%%%%%%%%%%%%%%%%%%%%%%%%%%%%%%%%%%%%%%%%%
  			%%%%%%%%%%%%%%%%%%%%%%%%%%%%%%%%%%%%%%%%%%%%
  			Let $A$ be an $\n$ real matrix with known real eigenpair $(\l,v)$. In this section we are going to derive some upper bounds for the largest in absolute value of the remaining eigenvalues of $A$. The result applies, in particular, to every nonnegative irreducible matrix with known Perron eigenpair $(\l_p, v_p)$.
  			\subsection{Some upper bounds derived from the preceding location theorems}
  			%,
  			It is natural that the farthest point of an inclusion set from the origin provides an upper bound for the absolute values of all eigenvalues comprised inside this region. Each of the locations obtained in the previous sections is a union of discs with centers and radii given explicitly in terms of the entries of the matrix. This allows for an easy derivation of the bounds.
  			Let $(\l_1,v)$ be a known real eigenpair of the real $\n$ matrix $A = [a_{ij}]$.
  			Without loss of generality, we assume that every components of $v$ is nonzero. If this is not the case, then we use Theorem \ref{t10}.\\	
  			By Theorem \ref{t3}, every eigenvalue $\l$ of $A$ different from $\l_1$ is in the Gershgorin region of the second type of the  matrix $B^T$ given by $B = S^{-1} A S,$ where $S= \text{diag}(v)$
  			This region is made up of the discs $\{ \hat{D}_i (a_{ii}, \hat{r}_i), ~i=1,\, 2 , \, \dots,\, n\}$, where the radius $\hat{r}_i$ is obtained from the $i$th column of $B$ according to Definition \ref{d1}.
  			Let $\l_2$ be a largest eigenvalue of $A$ in absolute value such that $\l_2 \neq \l_1$. Then there exists $i \in \{1, 2, \dots, n\}$ such that $\l_2 \in \hat{D}_i (a_{ii}, \hat{r}_i)$. That is $|\l_2-a_{ii}| \leq \hat{r}_i$. This implies that $|\l_2| \leq |a_{ii}| + \hat{r}_i$, from which we obtain the upper bound 
  			\begin{equation}
  			 |\l_2| \leq \underset{ 1\leq i \leq n}{\max} \{ |a_{ii}| + \hat{r}_i\}.
  			\end{equation}  			
  			If $n$ is even, this upper bound can be improved by considering the matrix $F = [f_{ij}]$ obtained from $B$ by using Theorem \ref{t4}.
  			Then following the same reasoning as above we obtain
  			\begin{equation}
  			|\l_2| \leq \underset{ 1\leq i \leq n}{\max} \{ |f_{ii}| + \hat{r}_i(F^T)\},
  			\end{equation}
  			where  $\hat{r}_i(F^T)$ is the radius of the \gs obtained from the $i$th column of $F$.
  			From this discussion, we state the following theorem where two cases are considered depending on whether the size of $A$ is odd or even. 	
  			%%%%%%%%%%%%%%%%%%%%%%%%%%%%%%%
  			%%%%%%%%%%%%%%%%%%%%%%%%%%%%%%%
  			%%%%%%%%%%%%%%%%%%%%%%%%%%%%%%%
  			%%%%%%%%%%%%%%%%%%%%%%%%%%%%%%%
  			%%%%%%%%%%%%%%%%%%%%%%%%%%%%%%%
  			
  			\begin{theorem}\label{t7}
  				Let %$v = (v_1, v_2, \dots, v_n)^T$
  				$(\l_1 , v)$ be a known real eigenpair of the $\n$ real matrix $A = [a_{ij}]$.
  				Suppose that every component of $v$ is nonzero.
  				Let $S= \text{diag}(v)$ and $B = S^{-1} A S$.
  				If $\l$ is an eigenvalue of  $A$ different from $\l_1$, then 
  				\begin{equation}\label{e4}
  				|\l| \leq \underset{ 1\leq i \leq n}{\max} \{ |a_{ii}| + \hat{r}_i \},
  				\end{equation}  				  
  				where $\hat{r_i}$ is the radius of the \gs obtained from the $i$th column of $B$.
  				Moreover,
  				\begin{enumerate}
  					\item If $n$ is even and $F=[f_{ij}]$ is the matrix obtained from $B$ by Theorem \ref{t4}, then  				 	
  					\begin{equation}\label{e5}
  					|\l| \leq \underset{ 1\leq i \leq n}{\max} \{ |f_{ii}| + \hat{r}_i(F^T)\},
  					\end{equation}
  					where  $\hat{r}_i(F^T)$ is the radius of the \gs obtained from the $i$th column of $F$.
  					\item If $n$ is odd and $F=[f_{ij}]$ and $G=[g_{ij}]$ are the matrices obtained from $B$ by Theorem \ref{t5}, then  				 	
  					\begin{equation}\label{e6}
  					|\l| \leq \min \Big\{ \underset{ 1\leq i \leq n}{\max} \{ |f_{ii}| + \hat{r}_i(F^T)\},
  					\underset{ 1\leq i \leq n}{\max} \{ |g_{ii}| + \hat{r}_i(G^T)\}\Big\},
  					\end{equation}
  					where  $\hat{r}_i(F^T)$  and $\hat{r}_i(G^T)$ are, respectively,  the radii of the \gss obtained from the $i$th columns of $F$ and $G$.
  				\end{enumerate}
  			\end{theorem}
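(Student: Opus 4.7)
The plan is to derive all three bounds as immediate consequences of the inclusion theorems from the earlier sections, combined with the triangle inequality. The key observation is that whenever an eigenvalue $\lambda$ lies in a Gershgorin disc of the second type $\hat{D}(c, r)$, the inequality $|\lambda - c| \leq r$ implies $|\lambda| \leq |c| + r$, and then maximizing over the finitely many discs making up the inclusion region yields an upper bound on $|\lambda|$.

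First I would prove bound (\ref{e4}). By Theorem \ref{t3}, since $v$ has no zero component, every eigenvalue $\lambda$ of $A$ different from $\lambda_1$ lies in the Gershgorin region of the second type of $B^T$, where $B = S^{-1}AS$. By Definition \ref{d1}, this region is the union $\bigcup_{i=1}^n \hat{D}_i(b_{ii}, \hat{r}_i)$; the remark following Theorem \ref{t3} notes that $b_{ii} = a_{ii}$, so the centers are exactly the diagonal entries of $A$. Hence there exists $i$ with $|\lambda - a_{ii}| \leq \hat{r}_i$, which gives $|\lambda| \leq |a_{ii}| + \hat{r}_i \leq \max_{1 \leq i \leq n}\{|a_{ii}| + \hat{r}_i\}$.

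For (\ref{e5}), with $n$ even, I would apply Theorem \ref{t4} to the constant row-sum matrix $B$ to obtain $F = [f_{ij}]$ whose Gershgorin region of the second type of $F^T$ still contains every eigenvalue of $B$ different from $\lambda_1$, and hence every such eigenvalue of $A$ since $A$ and $B$ are similar. The same triangle inequality argument applied to the discs $\hat{D}_i(f_{ii}, \hat{r}_i(F^T))$ of this region produces (\ref{e5}). For (\ref{e6}), with $n$ odd, I would invoke Corollary \ref{c2}: every eigenvalue of $A$ different from $\lambda_1$ lies in the Gershgorin region of the second type of $F^T$ and simultaneously in that of $G^T$. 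Applying the triangle inequality separately to each region yields the two maxima in (\ref{e6}), and since both serve as upper bounds, their minimum does too.

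I do not anticipate a genuine obstacle; the argument is a straightforward packaging of the earlier inclusion results via the triangle inequality. The only point requiring a bit of care is to make explicit, in each case, that the centers of the relevant Gershgorin discs of the second type are the diagonal entries of the matrix whose columns generate the radii (i.e.\ $a_{ii}$ for $B^T$, $f_{ii}$ for $F^T$, $g_{ii}$ for $G^T$), so that the bounds can be written cleanly in terms of the matrices $A$, $F$ and $G$ rather than requiring any additional change of variables.
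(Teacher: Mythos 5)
Your proposal is correct and matches the paper's own argument, which appears as the discussion immediately preceding the theorem statement: apply Theorem \ref{t3} (noting $b_{ii}=a_{ii}$), use the triangle inequality on the disc containing $\l$, and take the maximum over the discs, then repeat with $F$ (Theorem \ref{t4}) for even $n$ and with $F$ and $G$ (Theorem \ref{t5}, via Corollary \ref{c2}) for odd $n$, taking the minimum of the two resulting bounds. No gaps.
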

  		%%%%%%%%%%%%%%%%%%%%%%%%%%%%%%%%
  		%%%%%%%%%%%%%%%%%%%%%%%%%%%%%%%%
  		%%%%%%%%%%%%%%%%%%%%%%%%%%%%%%%%
  		%%%%%%%%%%%%%%%%%%%%%%%%%%%%%%%%
  		This theorem applies, in particular, to every $\n$ nonnegative irreducible matrix $A$ with known Perron eigenpair $(\l_p, v_p)$. 
  		Note that, in this case, the Perron eigenvalue $\l_p$ is itself an upper bound for $\l$. Therefore the upper bounds given by Theorem \ref{t7} are considered in the case they are smaller than $\l_p$. In other words, the upper bound given by (\ref{e4}) is better than $\l_p$ in the case where $\l_p$ is outside the \grs of $B$. The same idea applies to $F$ and $G$.
  		\begin{example}\label{ex4}
  			Let
  			$$A\,=\ \left[\begin{array}{r r r r}
  			4   & 3 &  4  &  6   \\
  			8   & 4 &  8  &  16   \\
  			16  & 8 &  2  &  12   \\
  			6   & 3 &  4  &  4 
  			\end{array} \right].$$ 
  			The Perron eigenvalue of $A$ is $\l_p = 24$ associated with Perron eigenvector $v_p = (1,\, 2,\, 2,\, 1)^T$.
  			The remaining distinct eigenvalues of $A$ are $\l_2 = -6$ and $\l_3 = -2$ ($\l_3$ has multiplicity $2$).
  			Second largest eigenvalue in absolute value $|\l_2| = 6$ can be bounded as follows.
  			We apply  Corollary \ref{c1} to $A$ to obtain the matrix 
  			$$B\,=\ \left[\begin{array}{r r r r}
  			4  & 6 &  8 &  6   \\
  			4  & 4 &  8 &  8   \\
  			8  & 8 &  2 &  6   \\
  			6  & 6 &  8 &  4 
  			\end{array} \right].$$ 
  			By Theorem \ref{t7}, the matrix $B$ gives an upper bound $m_B =14 > |\l_2| =6$.
  			To improve this bound, we first apply Theorem \ref{t4} to $B$ to obtain the matrix
  			$$F\,=\ \left[\begin{array}{r r r r}
  			-2  & ~0 &  ~0 &  ~0   \\
  			-2  & -2 &  ~0 &  ~2   \\
  			~2  & ~2 &  -6 &  ~0   \\
  			~0  & ~0 &  ~0 &  -2 
  			\end{array} \right].$$
  			Then we apply Theorem \ref{t7} to $F$ to obtain a new upper bound $m_F = 6 \geq |\l_2| = 6$.  				
  		\end{example}
  		 In the preceding example, the bound $m_F$ is perfect as it is equal to $|\l_2|$. This is not always the case.  		
  		 In fact, looking at several nonnegative matrices, we observed that the gap between  the second largest eigenvalue in absolute value and the bounds given by Theorem \ref{t7} can be sometimes relatively large. 
  		 In general, the eigenvector $v$, in Theorem \ref{t7},  can be associated with any eigenvalue of $A$ and not necessarily  with its spectral radius. This implies that the bounds given by Theorem \ref{t7} could be simply upper bounds of the spectral radius itself and therefore, should be compared to the many spectral radius upper bounds that are in the literature. In general, if $A$ has a known eigenpair $(\l,v)$, then the bounds given by Theorem \ref{t7} are bounding the largest absolute value among the eigenvalues of $A$ other than $\l$.
  		\begin{example}\label{ex5}
  			Let
  			$$
  			A\,=\ \left[ \begin{array}{r r r r r r r r}
  			-18  & 3   &  2 &  0  &  3 &  0  &   0   \\
  			12   & -18 &  0 & 12  &  0 &  0  &   12  \\
  			18   & 0   &-24 &  18 &  0 &  9  &   18  \\
  			0    & 3   &  2 & -24 &  3 &  3  &   0   \\
  			12   & 0   &  0 &  12 &-18 &  6  &   0   \\
  			0    & 0   &  4 &  12 &  6 & -24 &   12  \\
  			0    & 3   &  2 &  0  &  0 &  3  &  -18
  			\end{array} \right].$$  
  			Applying Theorem \ref{t3} to $A$, we obtain the matrix
  			$$
  			B\,=\ \left[ \begin{array}{r r r r r r r r}
  			-18  & 6   &  6 &  0   &  6 &  0  &   0   \\
  			6  & -18 &  0 &  6   &  0 &  0  &   6  \\
  			6  & 0   &-24 &  6   &  0 &  6  &   6  \\
  			0  & 6   &  6 & -24  &  6 &  6  &   0   \\
  			6  & 0   &  0 &   6  &-18 &  6  &   0   \\
  			0  & 0   &  6 &   6  &  6 & -24 &   6  \\
  			0  & 6   &  6 &   0  &  0 &  6  &  -18
  			\end{array} \right].$$  
  			Matrix $A$ is singular and has the eigenvector $v = (1,\, 2,\, 3,\, 1,\, 2,\, 2,\, 1)^T$ associated with the eigenvector $0$.
  			The other eigenvalues of $A$ are $-37.29,\, -32.49,\, -24,\\ -20.76,\, -15.51\,$ and $\,-13.95$.
  			Since $A$ and $B$ have the same spectrum, we apply Theorem \ref{t7} to $B$ to obtain an upper bound for the spectral radius of $A$ which is
  			$$|-37.29| \leq 42.$$
  			Note that the upper bounds given by the one norm and infinity norm of $B$ are both equal to $48$.
  			The upper bound given by the one norm of $A$ is $78$ and that given by the infinity norm of $A$ is $87$.  			
  		\end{example}
  	
  			The upper bounds given by Theorem \ref{t7} are considered for their simple and explicit algebraic forms and % but more important 
  			for their potential theoretical applications. %
  			For numerical applications, we think we have even more interesting bounds in the next subsection.  			
  		%%%%%%%%%%%%%%%%%%%%%%%%%%%%%%%%
  		%%%%%%%%%%%%%%%%%%%%%%%%%%%%%%%%
  		%%%%%%%%%%%%%%%%%%%%%%%%%%%%%%%%
  		%%%%%%%%%%%%%%%%%%%%%%%%%%%%%%%% 
  		%
  		\subsection{Bounding the eigenvalues of real matrices by using a class of matrix semi-norms}
  		  	Some of the important upper bounds obtained for the second largest eigenvalue, in absolute value, of a nonnegative matrix in general and a stochastic matrix in particular can be found in  \cite{Kol}, \cite{RT}, \cite{Se1}, \cite{T}, \cite{Tan}, the valuable book \cite{Se} and the interesting survey \cite{IS}.
  		  	Note that an important class of bounds is obtained in \cite{RT} by using the technique of converting a nonnegative matrix to a nonnegative constant row-sum matrix via the idea in Theorem \ref{t1}; look at \cite[Pages 63-64]{RT}.
  		  	In general, if $A$ is an nonnegative irreducible matrix having a Perron eigenpair $(\l_p , v_p)$ and $S =$ diag($v_p$), then every bound $m$ that applies to stochastic matrices applies also to the matrix $A$, since the matrix $B = \frac{1}{\l_p} S^{-1} A S$  is stochastic and its spectrum is proportional to that of $A$.\\
  		  	Other bounds obtained by using the technique of matrix deflation are discussed in \cite{H}.
  		  	If $A$ is any real matrix having an eigenvector $v$ with no zero component and $S =$ diag$(v)$, then every bound $M$ that applies to the general case of real constant row-sum matrices applies also to the matrix $A$, since the matrix $B = S^{-1} A S$ is constant row-sum and has the same spectrum as $A$. Such upper bounds are discussed in our recent articles \cite{HM4} and \cite{HM5}. If the eigenvector $v$ of $A$ has some components equal to zero, then Theorem \ref{t10} can be used.\\
  		  	%	
  		  	%
  		  	%%%%%%%%%%%%%%%%%%%%%%%%%%%%%%%%%%%%%%%%%%%%  		
  		  	%%%%%%%%%%%%%%%%%%%%%%%%%%%%%%%%%%%%%%%%%%%%  		
  		  	%%%%%%%%%%%%%%%%%%%%%%%%%%%%%%%%%%%%%%%%%%%%
  		  	Let $B$ be an $\n$ real constant row sum matrix such that $Be = \l_t e$. 
  		  	Let $\tau_p(B)$ be the nonnegative matrix function defined by
  		  	\begin{equation}\label{e14}
  		  	\tau_p(B) = \max_{\stackrel{x\in \mathbb{R}^n}{\stackrel{x^Te=0}{||x||_p =1}}} ||B^T x||_p,
  		  	\end{equation}
  		  	 where $||x||_p$ is the $l_p$-norm of the vector $x$ with $p\in \mathbb{N} \cup \{\infty\}$.
  		  	 %It is shown in \cite{HM5} that
  		  	If $\l$ is an eigenvalue of $B$ different from $\l_t$, then  		  	
  		  	\begin{equation}\label{e11}
  		  	 |\l| \leq \sqrt[k]{\tau_p(B^k)}, \text{~~for every~~} k \in \mathbb{N} \text{~~\cite[Corollary 2.9]{HM5}}.
  		  	\end{equation}
  		  	%
  		  	%%%%%%%%%%%%%%%%%%%%%%%%%%%%%%%%%%%%%%%%%%%%  		
  		  	%%%%%%%%%%%%%%%%%%%%%%%%%%%%%%%%%%%%%%%%%%%%  		
  		  	%%%%%%%%%%%%%%%%%%%%%%%%%%%%%%%%%%%%%%%%%%%%
  		  	Two functions $\tau_1(B)$ and $\tau_{\infty}(B)$ are numerically applicable because of their known explicit forms (look at \cite{IS} and the included references for stochastic matrices and at \cite{HM5} for the more general case of constant row-sum matrices):
  		  	\begin{equation}
  		  	\tau_1(B) = \frac{1}{2} \underset{i,j}{\max}\sum_{k=1}^n |a_{ik} - a_{jk}|
  		  	= \l_t - \underset{i,j}{\min}\sum_{k=1}^n \min\{a_{ik} , a_{jk}\},	
  		  	\end{equation}
  		  	and 
  		  	\begin{equation}
  		  	\tau_{\infty}(B) = \r(B),
  		  	\end{equation} 
  		  	where $\r(B)$ is defined as follows.
  		  	\begin{definition}\cite[Definition 2.3]{HM4} \label{d2}
  		  		Let $A= [a_{ij}]$ be an $\n$ real matrix.\\
  		  		Let $b_{1j}\geq \dots \geq b_{nj}$ be an arrangement in non-increasing order of 
  		  		$a_{1j}, \dots, a_{nj}$.
  		  		Then, for $j=1, \dots, n,~$ define $cs_j(A)$ in the following manner:\\
  		  		\hskip10mm $(1) ~~~ cs_j(A) =
  		  		\big(b_{1j}+\dots+b_{\frac{n-1}{2},j}\big)-\big(b_{\frac{n+3}{2},j}+\dots+b_{nj}\big),~~$
  		  		if $n$ is odd.\\
  		  		\hskip10mm $(2) ~~~ cs_j(A) = 
  		  		\big(b_{1j}+\dots+b_{\frac{n}{2},j}\big)-\big(b_{1+\frac{n}{2},j}+\dots+b_{nj}\big),~~$
  		  		if $n$ is even.\\
  		  		We define $\r(A)$ by:
  		  		$$ \r(A) = \underset{1\leq j \leq n}{\text{max}}\{cs_j(A)\}.$$
  		  	\end{definition}
  			%%%%%%%%%%%%%%%%%%%%%%%%%%%%%%%%%%%%%%%%%%%%  
  			%%%%%%%%%%%%%%%%%%%%%%%%%%%%%%%%%%%%%%%%%%%%  		
  			%%%%%%%%%%%%%%%%%%%%%%%%%%%%%%%%%%%%%%%%%%%%
  			 Counting multiplicities, let the spectrum of the real constant row-sum matrix $B$ be 
  			 $\{\l_1, \l_2, \dots, \l_n \}$, with $Be = \l_1 e$ and $|\l_2| \leq |\l_3| \leq \dots \leq |\l_n|$.
  			 We have shown, in \cite{HM5}, that
  			\begin{equation}\label{e12}
  			 \lim_{k \to \infty}~ \big(\tau_p(B^k)\big)^{1/k}=|\lambda_n|. \text{~~\cite[Theorem 2.15]{HM5}}
  			\end{equation}
  			 This convergence equation ensures good bounds for $|\l_n|$ by trying high powers of $B$.
  			 Note that if the row-sum constant $\l_1$ of $B$ is non-simple and equal in absolute value to the spectral radius of $A$, then
  			 $\lim_{k \to \infty}~ \big(\tau_p(B^k)\big)^{1/k} = |\l_1|$.\\
  			 %%%%%%%%%%%%%%%%%%%%%%%%%%%%%%%%%%%%%%%%%%%%  		
  			 %%%%%%%%%%%%%%%%%%%%%%%%%%%%%%%%%%%%%%%%%%%%  		
  			 %%%%%%%%%%%%%%%%%%%%%%%%%%%%%%%%%%%%%%%%%%%%
  			 For the case of the real matrix with a given real eigenpair $(\l_1, v)$ such that $v$ has no zero components, we have the following corollary which is an immediate consequence of (\ref{e11}).
  			 \begin{cor}\label{c3}
  			 	Let $A$ be an $\n$ real matrix having spectrum $\{\l_1, \l_2, \dots, \l_n\}$ (counting multiplicities).
  			 	Suppose that $\l_1$ is real and associated with a real eigenvector $v$ with no zero components and let $D =$ diag$(v)$.
  			 	Then for $i=2, 3, \dots, n$  and for every $k \in \mathbb{N}$ we have 
  			 	\begin{equation}\label{e18}
  			 	 |\l_i| \leq \sqrt[k]{\tau_p(B^k)},
  			 	\end{equation}
  			 	where $B$ is the constant row-sum matrix given by $B=D^{-1}AD$.
  			 \end{cor}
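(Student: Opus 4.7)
The plan is to reduce the corollary directly to the already-established inequality (\ref{e11}) by passing from $A$ to the similar constant row-sum matrix $B = D^{-1} A D$. First I would invoke Theorem \ref{t1}: since $v$ has no zero components, the matrix $D = \mathrm{diag}(v)$ is nonsingular, $B = D^{-1} A D$ is well-defined, similar to $A$, and satisfies $Be = \lambda_1 e$. Similarity preserves the spectrum with multiplicities, so the eigenvalues of $B$ are exactly $\{\lambda_1, \lambda_2, \dots, \lambda_n\}$, and $\lambda_1$ is the trivial (constant row-sum) eigenvalue of $B$.

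Next I would observe that for each $i = 2, 3, \dots, n$, the eigenvalue $\lambda_i$ of $A$ is a non-trivial eigenvalue of $B$ (here ``non-trivial'' means different from the row-sum constant $\lambda_1$; if some $\lambda_i$ happens to equal $\lambda_1$, it still corresponds to an eigenvalue of $B$ not generated by the vector $e$, but it is cleanest simply to apply the bound to every eigenvalue in the list and note the statement is trivially valid when $\lambda_i = \lambda_1$ as well, since (\ref{e11}) holds uniformly). Applying the bound (\ref{e11}), which is \cite[Corollary 2.9]{HM5} for the semi-norm $\tau_p$ on real constant row-sum matrices, gives immediately
\begin{equation*}
|\lambda_i| \leq \sqrt[k]{\tau_p(B^k)} \quad \text{for every } k \in \mathbb{N}.
\end{equation*}

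Since the bound depends only on $B$ (not on the index $i$), we obtain the stated inequality uniformly for $i = 2, 3, \dots, n$ and for every positive integer $k$. There is essentially no obstacle here: the content of the corollary lies entirely in the ingredients already assembled, namely Theorem \ref{t1} providing the similarity to a constant row-sum matrix and (\ref{e11}) providing the $\tau_p$-bound for non-trivial eigenvalues of such matrices. The only thing to be careful about in the write-up is the indexing convention and the remark that $\lambda_1$ playing the role of the trivial eigenvalue of $B$ is exactly what allows (\ref{e11}) to be applied to each of the remaining $\lambda_i$.
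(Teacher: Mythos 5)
Your proposal is correct and matches the paper's own justification, which simply declares the corollary ``an immediate consequence of (\ref{e11})'' after forming $B = D^{-1}AD$ via Theorem \ref{t1}. Your extra remark about the case where $\lambda_1$ is a repeated eigenvalue is a sensible clarification of the same argument rather than a different route.
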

  		 	 %%%%%%%%%%%%%%%%%%%%%%%%%%%%%%%%%%%%%%%%%%%%  		
  		 	 %%%%%%%%%%%%%%%%%%%%%%%%%%%%%%%%%%%%%%%%%%%%  		
  		 	 %%%%%%%%%%%%%%%%%%%%%%%%%%%%%%%%%%%%%%%%%%%%
  		 	 
  			 \begin{remark}
  			 If some of the components of $v$ are equal to zero, then we can use Theorem \ref{t10} to obtain matrix $C = SPAP^{-1}S^{-1}$.
  			 If $C$ is constant row-sum, we apply (\ref{e18}) to it. If not, then $C$ has an eigenvector $v' = SPv$ with no zero components, which allows us to obtain a constant row-sum matrix $B$ by Theorem \ref{t1}. Then (\ref{e18}) is applied to $B$.  				
  			 \end{remark}  		
  		 %%%%%%%%%%%%%%%%%%%%%%%%%%%%%%%%%%%%%%%%%%%%  		
  		 %%%%%%%%%%%%%%%%%%%%%%%%%%%%%%%%%%%%%%%%%%%%  		
  		 %%%%%%%%%%%%%%%%%%%%%%%%%%%%%%%%%%%%%%%%%%%%
  		 \noindent
  		 The inequality in (\ref{e18}) provides an upper bound on the determinants of real matrices.
  		 \begin{cor}\label{c4}
  		 	Let $A$ be an $\n$ real matrix having real eigenpair $(\l, v)$.
  		 	Suppose that $v$ has no zero components and let $D =$ diag$(v)$.
  		 	Then for every $k \in \mathbb{N}$ we have 
  		 	\begin{equation}\label{e24}
  		 	|\text{det}(A)| \leq |\l|~\big[\tau_p(D^{-1} A^k D)\big]^{\frac{n-1}{k}}.
  		 	\end{equation}  		 	 		
  		 	In particular,
  		 	\begin{equation}\label{e25}
  		 	|\text{det}(A)| \leq |\l|~\tau_p(D^{-1} A^{n-1} D).
  		 	\end{equation}
  		 \end{cor}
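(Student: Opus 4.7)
The plan is to combine two classical facts: the determinant equals the product of the eigenvalues (counted with multiplicity), and Corollary~\ref{c3} bounds every non-distinguished eigenvalue $|\l_i|$ (for $i\geq 2$) by $\sqrt[k]{\tau_p(B^k)}$, where $B=D^{-1}AD$. Since all of these bounds are uniform in $i$, taking the product over $i=2,\dots,n$ will immediately produce a factor of $[\tau_p(B^k)]^{(n-1)/k}$ multiplying $|\l|$, which is precisely (\ref{e24}).

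More concretely, first I would write $|\det(A)| = |\l_1||\l_2|\cdots|\l_n|$ and identify $\l_1=\l$ as the given eigenvalue, leaving the product $\prod_{i=2}^{n}|\l_i|$ to be controlled. Then I would invoke the identity
\begin{equation*}
B^k \;=\; (D^{-1}AD)^k \;=\; D^{-1}A^k D,
\end{equation*}
which follows by telescoping the $DD^{-1}$ pairs, so that Corollary~\ref{c3} yields
\begin{equation*}
|\l_i| \;\leq\; \bigl[\tau_p(D^{-1}A^k D)\bigr]^{1/k} \qquad (i=2,\dots,n).
\end{equation*}
Multiplying these $n-1$ inequalities and combining with $|\l_1|=|\l|$ gives (\ref{e24}).

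For the specialization (\ref{e25}), I would simply set $k=n-1$, which makes the exponent $(n-1)/k$ equal to $1$. The only small point to verify is that Corollary~\ref{c3} is legitimately applicable here: its hypotheses require that $v$ have no zero components (given) and that $B=D^{-1}AD$ be obtained as in Theorem~\ref{t1}, so that $B$ is constant row-sum with trivial eigenvalue $\l$ and hence the bound (\ref{e18}) applies to each of the remaining eigenvalues $\l_2,\dots,\l_n$.

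There is essentially no obstacle in this proof; the work has already been done in establishing (\ref{e11}) and Corollary~\ref{c3}. The only mild subtlety worth flagging is that the bound is exponent-free in $|\l|$ (it appears to the first power), reflecting the fact that $\l$ is excluded from the application of Corollary~\ref{c3}; accordingly, if one knew a sharper estimate for $|\l|$ itself (e.g.\ from Theorem~\ref{t7}), one could substitute it to tighten (\ref{e24}) further, but that refinement is outside the scope of the stated corollary.
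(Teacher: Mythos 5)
Your proposal is correct and matches the paper's intent exactly: the paper offers no separate proof of Corollary \ref{c4}, presenting it as an immediate consequence of (\ref{e18}), and your argument---writing $|\det(A)|$ as the product of the eigenvalue moduli, bounding each of the $n-1$ eigenvalues other than $\l$ via Corollary \ref{c3} using $(D^{-1}AD)^k = D^{-1}A^kD$, and setting $k=n-1$ for (\ref{e25})---is precisely that consequence spelled out.
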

  		 %%%%%%%%%%%%%%%%%%%%%%%%%%%%%%%%%%%%%%%%%%%%  		
  		 %%%%%%%%%%%%%%%%%%%%%%%%%%%%%%%%%%%%%%%%%%%%  		
  		 %%%%%%%%%%%%%%%%%%%%%%%%%%%%%%%%%%%%%%%%%%%%	 
  			 
  			 \begin{proposition}\label{p1}
  			 	Let $A$ and $B$ be as in Corollary \ref{c3} and let $F$ and $G$ be the $\n$ constant row-sum matrices obtained from $B$ by Theorem \ref{t4} or Theorem \ref{t5}. Then
  			 	\begin{equation}\label{e19}
  			 	\tau_p(B) = \tau_p(F), \text{~~if~~} n \text{~~is even}
  			 	\end{equation} 
  			 	and
  			 	\begin{equation}\label{e20}
  			 	\tau_p(B) = \tau_p(F) = \tau_p(G) , \text{~~if~~} n \text{~~is odd}.
  			 	\end{equation}
  			 \end{proposition}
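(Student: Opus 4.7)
\bigskip
\noindent
\textbf{Proof proposal.} The plan is to observe that in both the even and the odd case, the matrices $F$ (and $G$) are obtained from $B$ by subtracting a rank-one matrix of the form $e\alpha^T$, and that such a perturbation is invisible to the definition of $\tau_p$ because the feasible set in (\ref{e14}) consists precisely of vectors $x$ orthogonal to $e$.

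First I would recall, exactly as in the proof of Theorem \ref{t8}, that Theorem \ref{t4} produces $F$ by subtracting from the $j$th column of $B$ the scalar multiple $\alpha_j e$ for each $j$, so that
\begin{equation*}
F \;=\; B - e\alpha^T, \qquad \alpha = (\alpha_1, \ldots, \alpha_n)^T \in \mathbb{R}^n.
\end{equation*}
Taking transposes gives $F^T = B^T - \alpha e^T$. For any $x \in \mathbb{R}^n$ with $x^T e = 0$ we then have $F^T x = B^T x - \alpha (e^T x) = B^T x$, so $\|F^T x\|_p = \|B^T x\|_p$. The feasible sets in the definition (\ref{e14}) of $\tau_p(F)$ and $\tau_p(B)$ are identical (both equal the unit $l_p$-sphere inside the hyperplane $e^\perp$), and on this set the functions being maximized coincide. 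Taking suprema yields $\tau_p(F) = \tau_p(B)$, which establishes (\ref{e19}).

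For the odd case, Theorem \ref{t5} constructs $F$ and $G$ in exactly the same column-by-column manner (with shifts $\beta_j$ and $\gamma_j$ respectively), so the same argument shows $F = B - e\alpha^T$ and $G = B - e\beta^T$ for suitable real vectors $\alpha, \beta$. Applying the identity $\|F^T x\|_p = \|B^T x\|_p = \|G^T x\|_p$ on the constraint set $\{x : x^Te = 0, \|x\|_p = 1\}$ gives $\tau_p(F) = \tau_p(B) = \tau_p(G)$, which is (\ref{e20}).

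There is no real obstacle here; the proof reduces to the simple linear-algebraic fact that a rank-one update along $e$ on the right annihilates when paired against $e^\perp$. The only small point to check carefully is that the shifts introduced in Theorems \ref{t4} and \ref{t5} genuinely have the form ``subtract $\alpha_j e$ from the $j$th column of $B$,'' i.e.\ the shift depends only on $j$ and not on the row index $i$; this is immediate from the statements of those theorems, and is exactly the observation already used in the proof of Theorem \ref{t8}.
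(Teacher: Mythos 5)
Your proposal is correct and follows essentially the same route as the paper: both rest on the observation that $F$ (and $G$) differ from $B$ by a rank-one update of the form $e\alpha^T$, which is annihilated on the constraint set $\{x : x^Te = 0\}$ appearing in the definition (\ref{e14}) of $\tau_p$. The paper states this very tersely ("follow from the definition of $\tau_p$"), whereas you spell out the computation $F^Tx = B^Tx - \alpha(e^Tx) = B^Tx$; the content is identical.
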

  		     %%%%%%%%%%%%%%%%%%%%%%%%%%%%%%%%%%%%%%%%%%%%  		
  		     %%%%%%%%%%%%%%%%%%%%%%%%%%%%%%%%%%%%%%%%%%%%  		
  		     %%%%%%%%%%%%%%%%%%%%%%%%%%%%%%%%%%%%%%%%%%%%
  		 	 \begin{proof}
  		 		Observe that $F$ and $G$ are constructed by adding to each column of $B$ a multiple of the all $1$'s vector $e$. That is, $F = B ~ + ~ [\a_1 e ~ \a_2 e ~ \dots \a_n e],$\\ $\a_i \in \mathbb{R}$ and $G$ has a similar form.
  		 		Then (\ref{e19}) and (\ref{e20}) follow from the definition of $\tau_p$ given by (\ref{e14}).
  		 	 \end{proof}
  	 	 	%%%%%%%%%%%%%%%%%%%%%%%%%%%%%%%%%%%%%%%%%%%%  		
  	 	 	%%%%%%%%%%%%%%%%%%%%%%%%%%%%%%%%%%%%%%%%%%%%  		
  	 	 	%%%%%%%%%%%%%%%%%%%%%%%%%%%%%%%%%%%%%%%%%%%%
  			 \begin{rem}
  			 	 Proposition \ref{p1} may be considered for computation purposes. In the following examples, $F$ has a simpler structure compared to $B$ since it contains more zeros and smaller integers in absolute value. That makes the calculation of $\tau_1(F)$ and $\tau_{\infty}(F)$ faster. At the end, they have the same values as $\tau_1(B)$ and $\tau_{\infty}(B)$.
  			 \end{rem}
  		 	%%%%%%%%%%%%%%%%%%%%%%%%%%%%%%%%%%%%%%%%%%%%  		
  		 	%%%%%%%%%%%%%%%%%%%%%%%%%%%%%%%%%%%%%%%%%%%%  		
  		 	%%%%%%%%%%%%%%%%%%%%%%%%%%%%%%%%%%%%%%%%%%%%  	
  			 \begin{example}\label{ex6}
  			 	In the case where $v$ has no zero component, we reconsider matrix $A$ in Example \ref{ex4}. 
  			 	The given eigenpair consists of $\l_p = 24 $ and $v_p = (1,\, 2,\, 2,\, 1)^T$.
  			 	By applying (\ref{e18}) to $F$, we have the following bounds for $|\l_2| = 6$. 
  			 	\begin{center}
  			 		\begin{tabular}{|c|c|c|c|}\hline
  			 			$F^k$         	 						&   $F$   &  $F^3$  &  $F^{3}$     \\ \hline
  			 			$\sqrt[k]{\tau_{\infty}(F^k)}\approx$ 	&   $6$   &  $6$    &  $6$          \\ \hline
  			 			$\sqrt[k]{\tau_1(F^k)} \approx$ 		&   $8$   &  $6.93$ &  $6.54$       \\ \hline
  			 		\end{tabular}
  			 	\end{center}		
  		 		Note that the bound $\tau_{\infty}(F)$ is equal to the bound obtained in Example \ref{ex4} by applying Theorem \ref{t7} to $F$.
  			 \end{example}
  			 %%%%%%%%%%%%%%%%%%%%%%%%%%%%%%%%%%%%%%%%%%%%  		
  			 %%%%%%%%%%%%%%%%%%%%%%%%%%%%%%%%%%%%%%%%%%%%  		
  			 %%%%%%%%%%%%%%%%%%%%%%%%%%%%%%%%%%%%%%%%%%%%
  			 %
  		 	 %%%%%%%%%%%%%%%%%%%%%%%%%%%%%%%%%%%%%%%%%%%%  		
  		 	 %%%%%%%%%%%%%%%%%%%%%%%%%%%%%%%%%%%%%%%%%%%%  		
  		 	 %%%%%%%%%%%%%%%%%%%%%%%%%%%%%%%%%%%%%%%%%%%%  	
  			 \begin{example}\label{ex7}
  			 	For the case where $v$ has some zero components, we go back to Example \ref{ex8}.
  			 	The given eigenvalue is $\l_1 =0$ associated with eigenvector $v= (0, 0, 1, 1)^T$.
  			 	The eigenvalue we need to bound is $\l_2 = 2$.  			 	 				 
  			 	Some bounds obtained by applying (\ref{e18}) to $F$ are given in the following table.
  			 	\begin{center}
  			 		\begin{tabular}{|c|c|c|c|}\hline
  			 			$F^k$         	 		                &   $F$   &  $F^2$   &  $F^5$      \\ \hline
  			 			$\sqrt[k]{\tau_{\infty}(F^k)}\approx$ 	&   $8$   &  $3.16$  &  $2.51$     \\ \hline
  			 			$\sqrt[k]{\tau_1(F^k)} \approx$ 	    &   $10$  &  $3$     &  $2.34$     \\ \hline
  			 		\end{tabular}
  			 	\end{center}
  		 	Application of Theorem \ref{t7} to $F$ gives a bound $m_F =10$. This is the same as $\tau_1(F)$. However,		
  		 	the bound $\sqrt[5]{\tau_1(F^5)} \approx 2.34$ is much closer to $|\l_2| = 2$. 
  			 \end{example}
  		 %%%%%%%%%%%%%%%%%%%%%%%%%%%%%%%%%%%%%%%%%%%%  		
  		 %%%%%%%%%%%%%%%%%%%%%%%%%%%%%%%%%%%%%%%%%%%%  		
  		 %%%%%%%%%%%%%%%%%%%%%%%%%%%%%%%%%%%%%%%%%%%%
  		\bigskip
  		 \subsection{Comparison to some existing bounds}
  		 Finally, we make  comparisons between the bounds discussed in this section and those obtained in \cite{M}.
  		 For that, we consider the matrix given in \cite[Example 2]{M},
  		 $$A\,=\ \left[\begin{array}{r r r r}
  		 12  & 6  &  6    \\
  		 3   & 3  &  18  \\
  		 8   & 8  &  8    
  		 \end{array} \right].$$ 
  		 The given eigenpair is $(24, e)$ and the other eigenvalues of $A$ are $\l_2 = -6$ and $\l_3 = 5$.
  		 The eigenvalue to bound is $\l_2$. 	
  		 Two bounds obtained for this matrix in \cite{M} are $m_1= 12$ and $m_2 \approx 11.36$.
  		 Hoffman's bound also was calculated for this matrix and found to be $m_3 =12$.
  		 Note that $A$ is a constant row-sum matrix and the matrices $F$ and $G$ obtained from it according to Theorem \ref{t5} are
  		  $$F \,=\ \left[\begin{array}{r r r r}
  		   4   & -2  &  -12 \\
  		  -5   & -5  &   0  \\
  		   0   &  0  &  -10    
  		  \end{array}\right]
  		 \text{~~~and~~~} 
  		   G \,=\ \left[\begin{array}{r r r r}
  		   9   &  0  &   0    \\
  		   0   & -3  &   12 \\
  		   5   &  2  &   2    
  		 \end{array}\right].
  		 $$
  		 The bound obtained by applying Theorem \ref{t7} is $m_4 = 14$.
  		  Application of (\ref{e18}) to $G$ gives bounds
  		  $m_5 = \tau_{\infty}(G) = 12$ and $m_6 = \sqrt{\tau_{\infty}(G^2)} \approx 7.75$ by the use of $\tau_{\infty}$.
  		  Using matrix semi-norm $\tau_1$, we obtain
  		  $m_7 = \tau_1(G) = 6 = |\l_2|$ and no computation is needed for second or third power of the matrix $G$. 
  		  We observe that for this particular example, the bound given by $\tau_1(G)$ is perfect and the bound given by $\tau_{\infty}$ is relatively good if applied to $G^2$.\\
  		  In general, at the cost of a some matrix power computations, the bounds given by Corollary \ref{c3}
  		  outperform any other bound since their convergence is ensured by (\ref{e12}).
  		  %
  		  %%%%%%%%%%%%%%%%%%%%%%%%%%%%%%%%%%%%%%%%%%%%  		
  		  %%%%%%%%%%%%%%%%%%%%%%%%%%%%%%%%%%%%%%%%%%%%  		
  		  %%%%%%%%%%%%%%%%%%%%%%%%%%%%%%%%%%%%%%%%%%%%  				
  		  %
  		  %%%%%%%%%%%%%%%%%%%%%%%%%%%%%%%%%%%%%%%
  		  %%%%%%%%%%%%%%%%%%%%%%%%%%%%%%%%%%%%%%%
  		  %%%%%%%%%%%%%%%%%%%%%%%%%%%%%%%%%%%%%%%
  		  %%%%%%%%%%%%%%%%%%%%%%%%%%%%%%%%%%%%%%%
  		  %%%%%%%%%%%%%%%%%%%%%%%%%%%%%%%%%%%%%%%
  		  %
 		  %%%%%%%%%%%%%%%%%%%%%%%%%%%%%%%%%%%%%%%
  		  %%%%%%%%%%%%%%%%%%%%%%%%%%%%%%%%%%%%%%%
  		  %%%%%%%%%%%%%%%%%%%%%%%%%%%%%%%%%%%%%%%
  		  \section{Conclusion}
  		  In this work we have shown how the location of eigenvalues can be improved if a real eigenpair of the real matrix is known.
  		  A significant improvement of the original Gershgorin region of the transpose of the matrix is obtained if this matrix is large, dense and the elements in each row (or in each column if $A^T$ is considered) are close to each other. 
  		  The ideas being discussed give rise to some questions such as:\\
  		  \begin{enumerate}
  		  \item Are we able to find similar locations in the more general case of complex matrices?
  		  %	\item How can the obtained locations and bounds be improved if $k$ eigenpairs are known instead of just a single one
		  \item If two or more independent eigenvectors are known to be associated with the same eigenvalue, then how does this affect the location of the other eigenvalues of the matrix?
  		  \end{enumerate}
  		  %%%%%%%%%%%%%%%%%%%%%%%%%%%%%%%%%%%%%%%
  		  %%%%%%%%%%%%%%%%%%%%%%%%%%%%%%%%%%%%%%%
  		  %%%%%%%%%%%%%%%%%%%%%%%%%%%%%%%%%%%%%%%
  		  %%%%%%%%%%%%%%%%%%%%%%%%%%%%%%%%%%%%%%%
  		  %%%%%%%%%%%%%%%%%%%%%%%%%%%%%%%%%%%%%%%
  		  %%%%%%%%%%%%%%%%%%%%%%%%%%%%%%%%%%%%%%%
  		  %%%%%%%%%%%%%%%%%%%%%%%%%%%%%%%%%%%%%%%
  		  %%%%%%%%%%%%%%%%%%%%%%%%%%%%%%%%%%%%%%%
  		  %
   		  %%%%%%%%%%%%%%%%%%%%%%%%%%%%%%%%%%%%%%%
   		  %%%%%%%%%%%%%%%%%%%%%%%%%%%%%%%%%%%%%%%
   		  %%%%%%%%%%%%%%%%%%%%%%%%%%%%%%%%%%%%%%%
   		  %%%%%%%%%%%%%%%%%%%%%%%%%%%%%%%%%%%%%%%
   		  %%%%%%%%%%%%%%%%%%%%%%%%%%%%%%%%%%%%%%%
   		  %%%%%%%%%%%%%%%%%%%%%%%%%%%%%%%%%%%%%%%
          %
   	      %
     
\end{document}